\numberwithin{equation}{section}
\numberwithin{figure}{section}
\theoremstyle{plain}
\newtheorem{thm}{\protect\theoremname}
  \theoremstyle{definition}
  \newtheorem{defn}[thm]{\protect\definitionname}
  \theoremstyle{remark}
  \newtheorem{rem}[thm]{\protect\remarkname}
  \theoremstyle{plain}
  \newtheorem{lem}[thm]{\protect\lemmaname}
  \theoremstyle{plain}
  \newtheorem{prop}[thm]{\protect\propositionname}
  \theoremstyle{plain}
  \newtheorem{cor}[thm]{\protect\corollaryname}
  \providecommand{\corollaryname}{Corollary}
  \providecommand{\definitionname}{Definition}
  \providecommand{\lemmaname}{Lemma}
  \providecommand{\propositionname}{Proposition}
  \providecommand{\remarkname}{Remark}
\providecommand{\theoremname}{Theorem}
\begin{document}

\title[Waves in three species system]{traveling waves in a three species competition-cooperation system }

\author{xiaojie hou$^{1}$, yi li$^{2}$}

\address{$^{1}$Department of Mathematics and Statistics, Unievrsity of North
Carolina Wilmington, Wilmington, NC 28411.$^{2}$Department of Mathematics
\& Statistics, Wright State University , Dayton, OH, Dayton, OH 45435.}

\email{houx@uncw.edu, yi.li@wright.edu}

\keywords{Traveling wave, spatio-temporal delay, Lotka Volterra, Competition
Cooperation, Existence.}
\begin{abstract}
This paper studies the traveling wave solutions to a three species
competition cooperation system. The existence of the traveling waves
is investigated via monotone iteration method. The upper and lower
solutions come from either the waves of KPP equation or those of certain
Lotka Volterra system. We also derive the asymptotics and uniqueness
of the wave solutions. The results are then applied to a Lotka Volterra
system with spatially averaged and temporally delayed competition.
\end{abstract}
\maketitle

\section{introduction}

We study the traveling wave solutions of the following three species
competition cooperation system

\begin{equation}
\left\{ \begin{array}{l}
u_{t}=u_{xx}+u(1-u-a_{1}w),\\
u_{t}=v_{xx}+rv(1-a_{2}u-v),\\
w_{t}=w_{xx}+{\displaystyle \frac{1}{\tau}}(v-w),
\end{array}\quad\quad(x,t)\in\mathbb{R}\times\mathbb{R}^{+}.\right.\label{eq:1.1}
\end{equation}
where $u(x,t)$, $v(x,t)$ and $w(x,t)$ stand for the population
densities of the three different species, $a_{i}>0$ is interaction
constant, $i=1,2$ and $r>0$ ($-\frac{1}{\tau}<0$, respectively)
is the relative intrinsic growth rate of the species $v$ ($w$, respectively).
Aside from the intra-specific competitions, system (\ref{eq:1.1})
describes the relation that the species $w$ competes with $u$ and
$u$ competes with $v$, while $v$ cooperates with $w$.

The purpose of our study is of two folds: there are less results (see\cite{Miller,Hung})
on the traveling wave solutions to the three species systems even
for one with simple form as (\ref{eq:1.1}); on the other hand we
would like to extend the results of \cite{Ashwin} from the tempo-spatial
delayed KPP (Kolmogorov, Petrovsky and Piscounov) equation to the
following Lotka Volterra competition system with spatial temporal
delay

\begin{equation}
\left\{ \begin{array}{lll}
u_{t} & = & u_{xx}+u(1-u-a_{1}g**v),\\
\\
v_{t} & = & v_{xx}+rv(1-a_{2}u-v),
\end{array}\right.\label{eq:1.2}
\end{equation}
where the function $g**v=\int_{-\infty}^{+\infty}\int_{-\infty}^{t}\frac{1}{\sqrt{2\pi(t-s)}}e^{-\frac{(x-y)^{2}}{4(t-s)}}\frac{1}{\tau}e^{-\frac{(t-s)}{\tau}}v(s)dsdy$
represents the tempo-spatial delay of the response of species $v$
to $u$, here $g(x,t)=\frac{1}{\sqrt{4\pi t}}e^{\frac{x^{2}}{4t}}\frac{1}{\tau}e^{-\frac{t}{\tau}}$,
and satisfies

\begin{equation}
g**1=1.\label{eq:1.3}
\end{equation}

Noting that if $g**v$ is replaced by $v$ in (\ref{eq:1.2}), we
recover the classical Lotka Volterra competition system, and fruitful
results have been devoted in the study of traveling waves \cite{BoumenirNguyen,FeiCarr,Huang,Hosono,Kan-on,Kan-on-1,Kanel,KanelZhou,LeungHouFeng,Volpert,WuZou}
arising from it. It is interesting to see the long term effect of
introducing spatio-temporal delay to the competition. The temporal
delay accounts for the time once consumed resource the dominated species
needs to wait for its re-growth, and the spatial averaging accounts
for the fact that individuals are moving around and have therefore
not been at the same point in space at different times in their history
(see \cite{Ashwin}). 

On setting
\begin{equation}
w(x,t)=\frac{1}{\tau}\int_{-\infty}^{+\infty}\int_{-\infty}^{t}\frac{1}{\sqrt{2\pi(t-s)}}e^{\frac{(x-y)^{2}}{4(t-s)}}e^{-\frac{(t-s)}{\tau}}v(s)dsdy,\label{eq:1.4}
\end{equation}
we easily verify that $w(x,t)$ satisfies the following equation
\begin{equation}
w_{t}=w_{xx}+\frac{1}{\tau}(v-w).\label{eq:1.5}
\end{equation}

System (\ref{eq:1.2}) is now recasted into (\ref{eq:1.1}). The existence
of the traveling wave solution of (\ref{eq:1.1}) is equivalent to
that of (\ref{eq:1.2}).

Throughout the paper we make the following assumptions:

\begin{equation}
\mbox{Condition\,\ H1}:\,0<a_{1}<1<a_{2}\label{Condition H1}
\end{equation}
and either

\begin{equation}
\mbox{Condition \,\ H2a}:\: r(a_{2}-1)<(1-a_{1})\label{ConditionH2a}
\end{equation}
or 

\begin{equation}
\mbox{Condition \,\ H2b}:\: r(a_{2}-1)\geq(1-a_{1})\geq r(a_{1}a_{2}-1)\label{CoditionH2b}
\end{equation}

\begin{equation}
\mbox{Condition \,\ H3}:\: a_{2}>0\mbox{ is suitably large }.\label{eq:ConditionH3}
\end{equation}

Under the condition (\ref{Condition H1}), system (\ref{eq:1.1})
admits three constant equilibria: $(0,0,0)$, $(0,1,1)$ and $(1,0,0)$,
and the first two are unstable while the third is stable. We are interested
in finding the traveling wave solutions of (\ref{eq:1.1}) connecting
$(0,1,1)$ with $(1,0,0)$. Via transformation (\ref{eq:1.4}), the
existence of the traveling was solutions of (\ref{eq:1.1}) connecting
$(0,1,1)$ to $(1,0,0)$ is equivalent to that of (\ref{eq:1.2})
connecting $(0,1)$ with $(1,0)$.

A traveling wave solution for (\ref{eq:1.1}) has the form $(u(x,t),v(x,t),w(x,t))=(u(x+ct),v(x+ct),w(x+ct))=(u(\xi),v(\xi),w(\xi))$,
$\xi=x+ct$, and satisfies the system

\begin{equation}
\left\{ \begin{array}{l}
u_{\xi\xi}-cu_{\xi}+u(1-u-a_{1}w)=0,\\
v_{\xi\xi}-cv_{\xi}+rv(1-a_{2}u-v)=0,\\
w_{\xi\xi}-cw_{\xi}+{\displaystyle \frac{1}{\tau}}(v-w)=0,\\
(u,v,w)(-\infty)=(0,1,1),\quad(w,v,w)(+\infty)=(1,0,0).
\end{array}\right.\label{eq:1.10}
\end{equation}

For the convenience of later study, we change (\ref{eq:1.10}) into
monotone (\cite{Volpert}). Let $\bar{u}=u$, $\bar{v}=1-v$ and $\bar{w}=1-w$,
and drop the bars on $u$, $v$ and $w$, we arive at

\begin{equation}
\left\{ \begin{array}{l}
u_{\xi\xi}-cu_{\xi}+u(1-a_{1}-u+a_{1}w)=0,\\
v_{\xi\xi}-cv_{\xi}+r(1-v)(a_{2}u-v)=0,\\
w_{\xi\xi}-cw_{\xi}+{\displaystyle \frac{1}{\tau}}(v-w)=0,\\
(u,v,w)(-\infty)=(0,0,0),\quad(u,v,w)(+\infty)=(1,1,1).
\end{array}\right.\label{eq:1.11}
\end{equation}

Conditions (\ref{ConditionH2a}) and (\ref{CoditionH2b}) stipulate
that wave solution is either below or above the plane $u=v$, and
this is reflected in the construction of the upper and lower solutions
in section \ref{sec:3}. 
\begin{defn}
\label{Def1}A $C^{2}(\mathbb{R})\times C^{2}(\mathbb{R})$ function
$(\bar{u}(\xi),\bar{v}(\xi),\bar{w}(\xi))$, $\xi\in\mathbb{R}$ is
an upper solution of (\ref{eq:1.11}) if it satisfies the inequalities
\begin{equation}
\left\{ \begin{array}{l}
u_{\xi\xi}-cu_{\xi}+u(1-a_{1}-u+a_{1}w)\leq0,\\
v_{\xi\xi}-cv_{\xi}+r(1-v)(a_{2}u-v)\leq0,\\
w_{\xi\xi}-cw_{\xi}+{\displaystyle \frac{1}{\tau}}(v-w)\leq0,
\end{array}\right.\label{eq:1.12}
\end{equation}
and the boundary conditions 
\begin{equation}
\left(\begin{array}{c}
u\\
v\\
w
\end{array}\right)(-\infty)\geq\left(\begin{array}{c}
0\\
0\\
0
\end{array}\right),\,\;\left(\begin{array}{c}
u\\
v\\
w
\end{array}\right)(+\infty)\geq\left(\begin{array}{c}
1\\
1\\
1
\end{array}\right).\label{eq:1.13}
\end{equation}

A lower solution of (\ref{eq:1.11}) is defined in a similar way by
reversing the inequalities in (\ref{eq:1.12}) and (\ref{eq:1.13}). \end{defn}
\begin{rem}
\label{Remark2}The minimum of two upper solutions is still an upper
solution, i.e. suppose $W_{1}(\xi)=(w_{1},w_{2},w_{3})(\xi)$ and
$\bar{W}(\xi)=(\bar{w}_{1},\bar{w}_{2},\bar{w}_{3})(\xi)$, $\xi\in\mathbb{R}$
are two upper solutions, then $V(\xi)=(\min(w_{1}(\xi),\bar{w}_{1}(\xi)),\min(w_{2}(\xi),\bar{w}_{2}(\xi)),\min(w_{3}(\xi),\bar{w}_{3}(\xi)))$
is also an upper solution. For the lower solutions, we have similar
observation except that the minimum is replaced by maximum. 
\end{rem}
Since (\ref{eq:1.11}) is a monotone system, the monotone iteration
method (\cite{WuZou}) is ready to apply once the orderness of the
upper and lower solutions is estabished. The key to the monotone iteration
is to identify a pair of ordered upper and lower solutions to (\ref{eq:1.11})
(\cite{WuZou,BoumenirNguyen}). There are two methods to set up the
upper and lower solutions. The first one was used in \cite{BoumenirNguyen,Huang,WuZou},
which consists of a pair non-smooth upper and lower solutions, and
the similar idea was later successfully generalized to handle local
and nonlocal equations, the second one is based on a pair of smooth
upper and lower solutions from known equations, and the method was
applied in \cite{LeungHouFeng} for a general form of two species
Lotka Volterra competition system and in \cite{HouFeng} for a model
system arising from game theory. We will use the ideas of the second
method to set up the upper and lower solutions for (\ref{eq:1.11}).
See section 2 for details.

The other interesting aspects of the traveling front solutions are
the minimal wave speed, the uniqueness, the asymptotics and the stability.
The minimal wave speed is also referred to as the critical wave speed,
below which the there will be no monotonic traveling waves. Also the
traveling waves with the critical speed behaves differently at $-\infty$,
see \cite{LeungHouFeng}. We will use a generalized version of sliding
domain method (see \cite{Berestycki} ) to show the uniqueness of
the front solution corresponding to each speed. 

This paper is organized as follows: In section \ref{sec:2} we gather
the necessary information about the KPP and Lotka Volterra waves,
in particular we derive the asymptotics of the Lotka Volterra waves
at $-\infty$ which is the key in setting up the upper and lower solutions
for (\ref{eq:1.11}); In section \ref{sec:3} we show the existence
of the wave solutions for (\ref{eq:1.11}) and further derive their
properties such as strict monotonicity, the uniqueness and the asymptotics.

\section{\label{sec:2}properties of waves for kpp and a two species lotka
volterra competition equations}

In this section, we introduce porperties of the wave solutions to
KPP equation and to a two species Lotka Volterra competition system,
which will be a key ingredient in the construction of the upper and
lower solutions for system (\ref{eq:1.11}). For the rest of the paper
the inequality between two vectors is component-wise. 

The construction of the smooth upper and lower solution pairs for
system (\ref{eq:1.11}) is based on the known results on the KPP equations
and the recent results on a rescaled Lotka Volterra system. It seems
that the asymptotics of the Lotka Volterra waves derived in this section
is new.

Consider the following form of the KPP equation: 
\begin{equation}
\left\{ \begin{array}{l}
\omega''-c\omega'+f(\omega)=0,\\
\\
\omega(-\infty)=0,\quad\omega(+\infty)=b,
\end{array}\right.\label{eq:2.1}
\end{equation}
where $f\in C^{2}([0,\, b])$ and $f>0$ on the open interval $(0,b)$
with $f(0)=f(b)=0$, $f'(0)=\bar{a}>0$ and $f'(b)=-b_{1}<0$. We
first recall the following result (\cite{Sattinger}):
\begin{lem}
\label{lem:3}Corresponding to every $c\geq2\sqrt{\bar{a}}$, system
(\ref{eq:2.1}) has a unique (up to a translation of the origin) monotonically
increasing traveling wave solution $\omega(\xi)$ for $\xi\in\mathbb{R}$.
The traveling wave solution $\omega(\xi)$, $\xi\in\mathbb{R}$ has
the following asymptotic behaviors:

For the wave solution with non-critical speed $c>2\sqrt{\bar{a}}$,
we have 
\begin{equation}
\omega(\xi)=a_{\omega}e^{\frac{c-\sqrt{c^{2}-4\bar{a}}}{2}\xi}+o(e^{\frac{c-\sqrt{c^{2}-4\bar{a}}}{2}\xi})\mbox{ as }\xi\rightarrow-\infty,\label{eq:2.2}
\end{equation}
 
\begin{equation}
\omega(\xi)=b-b_{\omega}e^{\frac{c-\sqrt{c^{2}+4b_{1}}}{2}\xi}+o(e^{\frac{c-\sqrt{c^{2}+4b_{1}}}{2}\xi})\mbox{ as }\xi\rightarrow+\infty,\label{eq:2.3}
\end{equation}
 where $a_{\omega}$ and $b_{\omega}$ are positive constants.

For the wave with critical speed $c=2\sqrt{\bar{a}}$, we have

\begin{equation}
\omega(\xi)=(a_{c}+d_{c}\xi)e^{\sqrt{\bar{a}}\xi}+o(\xi e^{\sqrt{\bar{a}}\xi})\mbox{ as }\xi\rightarrow-\infty,\label{eq:2.4}
\end{equation}

\begin{equation}
\omega(\xi)=b-b_{c}e^{(\sqrt{\bar{a}}-\sqrt{\bar{a}+b_{1}})\xi}+o(e^{(\sqrt{\bar{a}}-\sqrt{\bar{a}+b_{1}})\xi})\mbox{ as }\xi\rightarrow+\infty,\label{eq:2.5}
\end{equation}
 where the constant $d_{c}$ is negative, $b_{c}$ is positive, and
$a_{c}\in\mathbb{R}$. 
\end{lem}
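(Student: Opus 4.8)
The plan is to prove Lemma~\ref{lem:3} by reducing the second-order scalar ODE to a first-order planar system and analyzing the linearizations at the two equilibria, which is the classical phase-plane approach to KPP fronts. Writing $\omega' = p$, system~\eqref{eq:2.1} becomes $\omega' = p$, $p' = cp - f(\omega)$, and the traveling wave we seek is a heteroclinic orbit in the $(\omega,p)$-plane joining the rest point $(0,0)$ to $(b,0)$. The existence and uniqueness (up to translation) of a monotone connection for each $c \geq 2\sqrt{\bar a}$ is exactly the content of Sattinger's result cited before the statement, so I would invoke that directly for the first sentence of the lemma; the substance of the proof is the four asymptotic formulas.

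For the behavior as $\xi \to +\infty$, I would linearize the planar system at $(b,0)$. Since $f'(b) = -b_1 < 0$, the Jacobian there has eigenvalues $\mu_{\pm} = \tfrac{1}{2}\bigl(c \pm \sqrt{c^2 + 4b_1}\bigr)$, one positive and one negative, so $(b,0)$ is a hyperbolic saddle for \emph{every} admissible $c$. The monotone orbit approaches $(b,0)$ along the stable manifold, whose tangent direction is the eigenvector associated with the negative eigenvalue $\mu_- = \tfrac{1}{2}\bigl(c - \sqrt{c^2 + 4b_1}\bigr)$. Standard stable-manifold theory (or a direct substitution of an exponential ansatz into the ODE) gives $\omega(\xi) = b - b_\omega e^{\mu_-\xi} + o(e^{\mu_-\xi})$, which is precisely~\eqref{eq:2.3}, and the same computation with $\bar a$ in place of $c^2/4$ yields~\eqref{eq:2.5} at the critical speed; the coefficient $b_\omega$ (resp.\ $b_c$) is positive because $\omega$ approaches $b$ from below along a monotone trajectory.

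The delicate endpoint is $\xi \to -\infty$, where $f'(0) = \bar a > 0$ forces the linearization at $(0,0)$ to have eigenvalues $\lambda_\pm = \tfrac{1}{2}\bigl(c \pm \sqrt{c^2 - 4\bar a}\bigr)$, and the dichotomy between non-critical and critical speeds arises exactly from the discriminant $c^2 - 4\bar a$. When $c > 2\sqrt{\bar a}$ the two eigenvalues are real, distinct, and positive, so $(0,0)$ is an unstable node; the monotone orbit enters along the weaker (smaller) eigenvalue $\lambda_- = \tfrac{1}{2}\bigl(c - \sqrt{c^2 - 4\bar a}\bigr)$, giving the pure exponential~\eqref{eq:2.2} with $a_\omega > 0$. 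When $c = 2\sqrt{\bar a}$ the eigenvalues coalesce at the repeated root $\sqrt{\bar a}$, the node is degenerate, and the generic approach acquires a polynomial factor: substituting the ansatz $\omega \sim (a_c + d_c\xi)e^{\sqrt{\bar a}\,\xi}$ into the linearized equation and matching confirms~\eqref{eq:2.4}. I expect the main obstacle to be justifying that the coefficient $d_c$ of the resonant term is genuinely nonzero (and negative), since positivity of $\omega$ for large negative $\xi$ forces $d_c \le 0$, and one must rule out the non-generic orbit that approaches along the pure $e^{\sqrt{\bar a}\,\xi}$ direction with $d_c = 0$; this requires a careful invariant-manifold argument at the degenerate node rather than a mere eigenvalue count, and is where the bulk of the rigor sits.
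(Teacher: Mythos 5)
The paper never proves this lemma: it is recalled verbatim from Sattinger \cite{Sattinger}, so there is no in-paper argument to compare against, and your phase-plane reduction is the standard route by which such results are established. Your treatment of $\xi\to+\infty$ is sound: $(b,0)$ is a hyperbolic saddle for every $c$, the front lies on its one-dimensional stable manifold, and the nonzero leading coefficient with the sign claimed in \eqref{eq:2.3} and \eqref{eq:2.5} follows from monotone approach from below. You also correctly identify the resonant coefficient $d_c$ at the critical speed as a genuinely delicate point; in fact it shows the lemma is not true under only the stated hypotheses. At the pushed/pulled transition, e.g. $f(u)=u(1-u)(1+2u)$ with $\bar a=f'(0)=1$, the critical front at $c=2=2\sqrt{\bar a}$ decays like a pure exponential $e^{\sqrt{\bar a}\,\xi}$ as $\xi\to-\infty$, so $d_c=0$ there. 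A complete proof must therefore invoke a KPP-type condition such as $f(u)\le f'(0)u$ (satisfied by the quadratic logistic nonlinearities to which the paper actually applies the lemma), which is also what makes the claimed existence for \emph{all} $c\ge 2\sqrt{\bar a}$ correct in the first place.

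There is a second gap you did not flag, symmetric to the one you did: at the non-critical speed $c>2\sqrt{\bar a}$ you assert that the orbit enters the unstable node $(0,0)$ along the slow direction $\lambda_-=\tfrac12\bigl(c-\sqrt{c^2-4\bar a}\bigr)$, citing genericity of node orbits. Genericity does not settle this, because the front is one \emph{specific} orbit (the unstable separatrix of the saddle at $(b,0)$ followed backwards), and a priori it could be one of the exceptional orbits tangent to the fast direction $\lambda_+$, which would replace \eqref{eq:2.2} by decay at rate $\lambda_+$. Ruling this out requires an additional argument of the same character as the $d_c\neq 0$ question — for instance squeezing the front between sub- and supersolutions that both decay at rate $\lambda_-$, or a comparison with the linearization using $f(u)\le \bar a u$ — so it deserves the same billing you gave the critical case rather than being absorbed into an eigenvalue count. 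With the KPP restriction made explicit and these two selection arguments supplied, your outline becomes a correct proof.
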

We also need the existence and asymptotics (at $-\infty$) of the
solutions of the following rescaled version of Lotka Volterra system:

\begin{equation}
\left\{ \begin{array}{l}
u_{\xi\xi}-cu_{\xi}+u(1-a_{1}-u+a_{1}w)=0,\\
\\
v_{\xi\xi}-cv_{\xi}+r(1-v)(a_{2}u-v)=0,\\
\\
(u,v)(-\infty)=(0,0),\quad(u,v)(+\infty)=(1,1).
\end{array}\right.\label{eq:2.6}
\end{equation}

The asymptotics of the wave solutions will be derived by comparing
the asymptotic decay rates of the upper and lower solutions of (\ref{eq:2.6}).
Recalling that $(\bar{u},\bar{v})(\xi)\in C^{2}(\mathbb{R})\times C^{2}(\mathbb{R})$,
$\xi\in\mathbb{R}$ is an upper solution for (\ref{eq:2.6}) if it
satisfies 

\begin{equation}
\left\{ \begin{array}{l}
u_{\xi\xi}-cu_{\xi}+u(1-a_{1}-u+a_{1}w)\leq0,\\
\\
v_{\xi\xi}-cv_{\xi}+r(1-v)(a_{2}u-v)\leq0,\\
\\
(u,v)(-\infty)\geq(0,0),\quad(u,v)(+\infty)\geq(1,1).
\end{array}\right.\label{eq:2.7}
\end{equation}
and we can define the lower solution for (\ref{eq:2.6}) by reversing
the above inequalities. 

We have the following conclusions:
\begin{lem}
\label{lem:4}Let the parameters satisfy either H1, H2a or H1, H2b,
then for each $c\geq2\sqrt{1-a_{1}}$ , system (\ref{eq:2.6}) has
a unique (up to a shift of the origin) strictly monotonic solution,
and for $0\leq c<2\sqrt{1-a_{1}}$, (\ref{eq:2.6}) does not have
monotonic solution. At $-\infty$ the solution has the following asymptotical
properties:

For $c>2\sqrt{1-a_{1}}$, the solution $(u(\xi),v(\xi))$ satisfies,
as $\xi\rightarrow-\infty$; 
\begin{equation}
\left(\begin{array}{c}
u(\xi)\\
\\
v(\xi)
\end{array}\right)=\left(\begin{array}{c}
A_{1}\\
\\
A_{2}
\end{array}\right)e^{\frac{c-\sqrt{c^{2}-4(1-a_{1})}}{2}\xi}+o(e^{\frac{c-\sqrt{c^{2}-4(1-a_{1})}}{2}\xi}).\label{eq:2.8}
\end{equation}

while for $c_{\mbox{ }}^{*}=2\sqrt{1-a_{1}}$, the solution $(u(\xi),v(\xi))^{T}$
satisfies 
\begin{equation}
\left(\begin{array}{c}
u(\xi)\\
\\
v(\xi)
\end{array}\right)=\left(\begin{array}{c}
A_{11c}+A_{12c}\xi\\
\\
A_{21c}+A_{22c}\xi
\end{array}\right)e^{\sqrt{1-a_{1}}\xi}+o(\xi e^{\sqrt{1-a_{1}}\xi})\label{eq:2.9}
\end{equation}
as $\xi\rightarrow-\infty$, where $A_{1},A_{2}>0$, $A_{11c},A_{21c}\in\mathbb{R}$
and $A_{12c},A_{22c}<0$.\end{lem}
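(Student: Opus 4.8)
The plan is to split the claim into two parts: the existence, uniqueness, monotonicity and threshold statements, which I would obtain from the established theory of monotone (cooperative) Lotka--Volterra competition systems in the references, and the asymptotics at $-\infty$, which is the genuinely new ingredient and which I would extract from the linearization at the unstable state $(0,0)$. The decisive structural observation is that, written in the monotone form, the linearization of the first equation at $(0,0)$ decouples from $v$: the competition coupling ($a_1uv$) is quadratic in the unknowns and carries no linear part, so the $u$-equation linearizes to $u_{\xi\xi}-cu_\xi+(1-a_1)u=0$. Hence $u$ obeys, to leading order, a scalar Fisher/KPP linear equation with growth rate $1-a_1$, and Lemma~\ref{lem:3} applies verbatim to it. This pins the threshold $c^{*}=2\sqrt{1-a_1}$: the characteristic roots $\lambda_\pm=\frac{c\pm\sqrt{c^{2}-4(1-a_1)}}{2}$ are real and positive exactly when $c\ge 2\sqrt{1-a_1}$ and complex below it, which is what forces a monotone front to exist only for $c\ge c^{*}$ and to fail (through forced oscillation near $0$) for $0\le c<c^{*}$.

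For the non-critical case $c>c^{*}$ I would argue as follows. The decoupled $u$-equation gives $u(\xi)=A_1 e^{\lambda_1\xi}+o(e^{\lambda_1\xi})$ with $\lambda_1=\frac{c-\sqrt{c^{2}-4(1-a_1)}}{2}$ the smaller (slower-decaying, hence dominant as $\xi\to-\infty$) root and $A_1>0$ by positivity of the profile, exactly as in (\ref{eq:2.2}). I then treat the $v$-equation as a linear inhomogeneous problem, $v_{\xi\xi}-cv_\xi-rv=-ra_2 u+O(u^{2}+v^{2})$, whose forcing decays at rate $\lambda_1$. Because $\lambda_1^{2}-c\lambda_1-r=-(1-a_1)-r\neq 0$, the exponent $\lambda_1$ is not a root of the homogeneous symbol $\mu^{2}-c\mu-r$, so the particular solution inherits the rate $\lambda_1$, while the only admissible homogeneous mode $\mu_+=\frac{c+\sqrt{c^{2}+4r}}{2}>\lambda_1$ decays strictly faster and is subdominant. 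Balancing coefficients yields $v(\xi)=A_2 e^{\lambda_1\xi}+o(e^{\lambda_1\xi})$ with $A_2=\dfrac{r a_2 A_1}{(1-a_1)+r}>0$, which is (\ref{eq:2.8}).

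At the critical speed $c^{*}=2\sqrt{1-a_1}$ the $u$-linearization has the double root $s:=\sqrt{1-a_1}$, so $u(\xi)=(A_{11c}+A_{12c}\xi)e^{s\xi}+o(\xi e^{s\xi})$, and positivity of $u$ near $-\infty$ forces the slope $A_{12c}<0$, in parallel with $d_c<0$ in (\ref{eq:2.4}). Substituting into the forced $v$-equation and seeking a solution of the same form $(\alpha+\beta\xi)e^{s\xi}$, the choice $c=2s$ annihilates the first-order matching term (the operator sends $(\alpha+\beta\xi)e^{s\xi}$ to $-((1-a_1)+r)(\alpha+\beta\xi)e^{s\xi}$), so balancing against $-ra_2(A_{11c}+A_{12c}\xi)e^{s\xi}$ gives $\beta=A_{22c}=\dfrac{r a_2 A_{12c}}{(1-a_1)+r}<0$, which is (\ref{eq:2.9}). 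I note that conditions H2a and H2b enter only through the global construction (the ordering and position of the upper and lower solutions relative to $u=v$) and monotonicity, not through the leading rate at $-\infty$, which is governed entirely by the decoupled $u$-linearization; this is why the asymptotic statement is uniform across both parameter regimes.

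The main obstacle is not the formal computation above but its rigorous justification: showing that the true nonlinear profile realizes the predicted leading mode, with the stated sign of the coefficient, rather than some faster-decaying combination. I would handle this through the comparison scheme already advertised in the text, trapping each component between upper and lower solutions built from the linear modes $Ae^{\lambda_1\xi}$ (resp. $(A+B\xi)e^{s\xi}$) and then bootstrapping the nonlinear remainder $O(u^{2}+uv+v^{2})$, whose decay rate $2\lambda_1$ (resp. faster) is strictly subdominant. The most delicate point is the critical case: the coincidence of the two exponents produces the algebraic factor $\xi$, so the resonance must be resolved carefully, and the strict negativity of $A_{12c}$, hence of $A_{22c}$, must be tied to the monotonicity and positivity of the front rather than merely read off from the characteristic equation.
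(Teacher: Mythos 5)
Your proposal is correct in substance and reaches the stated conclusions, but it takes a partly different route from the paper. The paper never performs the linearized mode-matching at $(0,0)$ that forms the core of your argument: instead it constructs explicit global barriers from KPP waves --- the lower solution $(\underline{u},\underline{u})$ with $\underline{u}$ solving the KPP problem (\ref{eq:2.10}) (Claim A), and the piecewise upper solution (\ref{eq:2.13}) built from the rescaled KPP wave $\hat{u}$ of (\ref{eq:2.12}) together with $\bar{v}=\frac{1-l}{a_{1}}\hat{u}$, the parameter $l$ chosen via (\ref{eq:2.11}) so that the $v$-inequality in (\ref{eq:2.14})--(\ref{eq:2.17}) closes (Claim B) --- and then squeezes the monotone-iteration wave between these barriers; since both barriers have the exact decay (\ref{eq:2.2}), resp.\ (\ref{eq:2.4}), of Lemma \ref{lem:3} with the common exponent determined by $f'(0)=1-a_{1}$, the asymptotics (\ref{eq:2.8}) and (\ref{eq:2.9}), including the signs $A_{1},A_{2}>0$ and $A_{12c},A_{22c}<0$, follow at once. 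Your variation-of-parameters treatment of the $v$-equation is correct (the non-resonance $\lambda_{1}^{2}-c\lambda_{1}-r=-\left((1-a_{1})+r\right)\neq0$, and the critical-case computation with the double root $s=\sqrt{1-a_{1}}$ both check out) and buys something the paper does not state: the explicit coefficient relations $A_{2}=\frac{ra_{2}A_{1}}{(1-a_{1})+r}$ and $A_{22c}=\frac{ra_{2}A_{12c}}{(1-a_{1})+r}$. What the paper's approach buys is precisely the step you leave as a sketch: your claim that Lemma \ref{lem:3} applies ``verbatim'' to the $u$-component is not literally valid, since the coupling $a_{1}uv$ decouples only at linear order, so the wave's $u$-component is not itself a KPP front, and linearization alone cannot decide between the slow mode $e^{\lambda_{1}\xi}$ and the fast mode $e^{\lambda_{2}\xi}$, nor fix $A_{1}>0$ (a fast-decaying profile is also positive); to your credit you flag exactly this obstacle, and the repair you propose --- trapping between barriers with the slow decay and bootstrapping the quadratic remainder --- is in effect the paper's Claims A and B, whose verification (notably the admissible range (\ref{eq:2.11}) for $l$) is the only genuinely laborious part of the paper's proof and would still have to be carried out to complete yours; note also that pure exponentials cannot serve as global barriers, which is why the paper patches scaled KPP waves instead. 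For existence, uniqueness, strict monotonicity and the threshold $c^{*}=2\sqrt{1-a_{1}}$, both you and the paper defer to the literature (\cite{LiWeinbergerLewis}, \cite{LeungHouFeng}), so there is no discrepancy on that part.
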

\begin{proof}
The existence of the waves under conditions H1-H2a, or H1, H2b is
contained in \cite{LiWeinbergerLewis}, and also refer to \cite{LeungHouFeng}
for the existence and the asymptotics of the waves in a more general
form of a two species competition system under conditions H1-H2a.

From now on we will concentrate on system (\ref{eq:2.6}) under conditions
H1 and H2b. The authors in \cite{LiWeinbergerLewis} proved the existence
of the monotone solutions for $c\geq2\sqrt{1-a_{1}}$ by showing that
(\ref{eq:2.6}) is linearly determinated. However such method does
not bring us the crucial information on the aymptotics of the wave
solutions that is needed in section \ref{sec:3}. We show that the
traveling wave as derived in \cite{LiWeinbergerLewis} is actually
squeezed by the lower and upper solutions of (\ref{eq:2.6}) constructed
below. Noting the upper and lower solutions differ from that in \cite{Huang}. 

We first set up the lower solution for (\ref{eq:2.6}).

For a fixed $c\geq2\sqrt{1-a_{1}}$, let $\underline{u}(\xi)$, $\xi\in\mathbb{R}$
be a corresponding solution of the following KPP equation 

\begin{equation}
\left\{ \begin{array}{l}
u''-cu'+(1-a_{1})u(1-{\displaystyle u)=0,}\\
\\
u(-\infty)=0,\quad u(+\infty)={\displaystyle 1.}
\end{array}\right.\label{eq:2.10}
\end{equation}

It is straightforward to verify the following claim.

\noindent \textit{Claim A. Under conditions H1 and H2b, for each fixed
$c\geq2\sqrt{1-a_{1}}$, the smooth function $(\underline{u},\underline{v})(\xi)\doteq(\underline{u},\underline{u})(\xi)$,
$\xi\in\mathbb{R}$ defines a lower solution for (\ref{eq:2.6}).}

We next set up an super solution for (\ref{eq:2.6}). Choosing a small
number $l$ such that
\begin{equation}
0<l\leq{\displaystyle \frac{1-a_{1}-r(a_{1}a_{2}-1)}{1+r-a_{1}}}.\label{eq:2.11}
\end{equation}
Let $c\geq2\sqrt{1-a_{1}}$ be fixed and $\hat{u}(\xi)$ be a solution
of the following modified KPP equation:
\begin{equation}
\left\{ \begin{array}{l}
u''-cu'+(1-a_{1})u(1-{\displaystyle \frac{l}{1-a_{1}}u)=0,}\\
\\
u(-\infty)=0,\quad u(+\infty)={\displaystyle \frac{1-a_{1}}{l}.}
\end{array}\right.\label{eq:2.12}
\end{equation}
Condition H2b (see \ref{CoditionH2b}) implies that $\frac{1-a_{1}-r(a_{1}a_{2}-1)}{1+r-a_{1}}<1-a_{1}$
we therefore have $\frac{1-a_{1}}{l}>1$.

Setting 
\begin{equation}
(\bar{u},\bar{v})(\xi)=\left\{ \begin{array}{l}
(\hat{u}(\xi),{\displaystyle \frac{1-l}{a_{1}}\hat{u}(\xi)),\quad\mbox{if}\,\hat{u}(\xi)\leq\frac{a_{1}}{1-l};}\\
\\
(\hat{u}(\xi),1),\quad\mbox{if}\,{\displaystyle \frac{a_{1}}{1-l}\leq\hat{u}(\xi)\leq1;}\\
\\
(1,1),\quad\mbox{if}\,\hat{u}(\xi)\geq1.
\end{array}\right.\label{eq:2.13}
\end{equation}
\textit{Claim B . Assume conditions in Claim A and let $l$ satisfy
(\ref{eq:2.11}), then (\ref{eq:2.13}) defines an upper solution
for (\ref{eq:2.6}).}

\noindent \begin{flushleft}
\textit{Proof of claim B.} It is easy to see that $(\bar{u},\bar{v})(-\infty)=(0,0)$,
$(\bar{u},\bar{v})(+\infty)=(1,1)$, and $(u,v)(\xi)=(1,1)$, $\xi\in\mathbb{R}$
solves the first two equations of (\ref{eq:2.6}). We next verify
that $(u,v)(\xi)=(\hat{u}(\xi),{\displaystyle \frac{1-l}{a_{1}}\hat{u}(\xi))}$
is also an upper solution for (\ref{eq:2.6}).
\par\end{flushleft}

For the $u$ component we have
\[
\begin{array}{ll}
 & \hat{u}''-c\hat{u}'+\hat{u}(1-a_{1}-\hat{u}+a_{1}\hat{v})\\
\\
= & \hat{u}''-c\hat{u}'+\hat{u}(1-a_{1}-\hat{u}+(1-l)\hat{u})\\
\\
= & \hat{u}''-c\hat{u}'+(1-a_{1})\hat{u}(1-\frac{l}{1-a_{1}}\hat{u})\\
\\
 & -(1-a_{1})\hat{u}(1-\frac{l}{1-a_{1}}\hat{u})+\hat{u}(1-a_{1}-\hat{u}+a_{1}\hat{v})\\
\\
= & \hat{u}[1-a_{1}-\hat{u}+(1-l)\hat{u}-(1-a_{1})(1-\frac{l}{1-a_{1}}\hat{u})]=0,
\end{array}
\]
 and for the $v$ component we have
\begin{equation}
\begin{array}{ll}
 & \hat{v}_{\xi\xi}-c\hat{v}_{\xi}+r(1-\hat{v})(a_{2}\hat{u}-\hat{v})\\
\\
= & \frac{1-l}{a_{1}}\hat{u}''-c\frac{1-l}{a_{1}}\hat{u}'+r(1-\frac{1-l}{a_{1}}\hat{u})(a_{2}\hat{u}-\frac{1-l}{a_{1}}\hat{u})\\
\\
= & \frac{1-l}{a_{1}}[\hat{u}''-c\hat{u}'+\frac{a_{1}}{1-l}r(1-\frac{1-l}{a_{1}}\hat{u})(a_{2}-\frac{1-l}{a_{1}})\hat{u}\\
\\
 & +(1-a_{1})\hat{u}(1-\frac{l}{1-a_{1}}\hat{u})-(1-a_{1})\hat{u}(1-\frac{l}{1-a_{1}}\hat{u})]\\
\\
= & \frac{1-l}{a_{1}}[\frac{a_{1}}{1-l}r(1-\frac{1-l}{a_{1}}\hat{u})(a_{2}-\frac{1-l}{a_{1}})\hat{u}-(1-a_{1})\hat{u}(1-\frac{l}{1-a_{1}}\hat{u})]\\
\\
= & \frac{1-l}{a_{1}}\hat{u}[r(\frac{a_{2}a_{1}}{1-l}-1)-(1-a_{1})+\hat{u}(l-r(a_{2}-\frac{1-l}{a_{1}}))].
\end{array}\label{eq:2.14}
\end{equation}
To ensure the non-positiveness of the last expression in (\ref{eq:2.14}),
we require: 
\begin{equation}
r(\frac{a_{2}a_{1}}{1-l}-1)-(1-a_{1})\leq0,\label{eq:2.15}
\end{equation}
and either 
\begin{equation}
l-r(a_{2}-\frac{1-l}{a_{1}})\leq0,\label{eq:2.16}
\end{equation}
or
\begin{equation}
\begin{array}{c}
l-r(a_{2}-\frac{1-l}{a_{1}})\geq0,\ \mbox{and}\\
r(\frac{a_{2}a_{1}}{1-l}-1)-(1-a_{1})+\hat{u}(\xi)(l-r(a_{2}-\frac{1-l}{a_{1}}))\leq0\,\mbox{ for all }\xi\in\mathbb{R}.
\end{array}\label{eq:2.17}
\end{equation}
It is a tedious but straightforward verification that for $l$ satisfy
(\ref{eq:2.11}), we have (\ref{eq:2.15}) and (\ref{eq:2.16}), or
(\ref{eq:2.15}) and (\ref{eq:2.17}). Hence in either case the last
expression in (\ref{eq:2.14}) is less than or equal to $0$. 

Analogous to Remark \ref{Remark2} we see that (\ref{eq:2.13}) is
an upper solution. This finishes the proof of the Claim B.

Noting the relation 
\begin{equation}
\hat{u}(\xi)={\displaystyle \frac{1-a_{1}}{l}\underline{u}(\xi)>\underline{u}(\xi)}\quad\quad\xi\in\mathbb{R},\label{eq:2.18}
\end{equation}
there follows the orderness of the upper and lower solution 
\begin{equation}
(\underline{u}(\xi),\underline{v}(\xi))\leq(\bar{u}(\xi),\bar{v}(\xi))\quad\quad\xi\in\mathbb{R}.\label{eq:2.19}
\end{equation}
By the monotone iteration scheme (\cite{WuZou}), for each fixed $c\geq2\sqrt{1-a_{1}}$
the traveling wave solution $(u,v)(\xi)$ satisfies 
\begin{equation}
(\underline{u},\underline{v})(\xi)\leq(u,v)(\xi)\leq(\bar{u},\bar{v})(\xi),\qquad\xi\in\mathbb{R}.\label{eq:2.20}
\end{equation}
Let $c\geq2\sqrt{1-a_{1}}$ be fixed and let $(u(\xi),v(\xi))$, $\xi\in\mathbb{R}$
be a corresponding solution. The upper and lower solutions have the
exactly asymptotic decay rate at $-\infty$by Lemma \ref{lem:3},
the estimates (\ref{eq:2.8}) and (\ref{eq:2.9}) then readily follow.
\end{proof}

\section{\label{sec:3}existence of the waves}

\subsection{\label{sub:3.1}Ordered upper and lower solutions under conditions
H1 and H2a.}

To construct the upper-solution for the system (\ref{eq:1.11}) in
this case, we begin with the following form of KPP system: 
\begin{equation}
\left\{ \begin{array}{l}
\tilde{u}''-c\tilde{u}'+{\displaystyle (1-a_{1})\tilde{u}(1-\tilde{u})}=0,\\
\\
\tilde{v}(-\infty)=0,\quad\tilde{v}(+\infty)=1,
\end{array}\right.\label{eq:3.1}
\end{equation}
where relating to (\ref{eq:2.1}), $f(\tilde{v})=(1-a_{1})\tilde{u}(1-\tilde{u})>0$
for $\tilde{u}\in(0,1)$. $f(0)=f(1)=0$, $f'(0)=(1-a_{1})>0$ and
$f'(1)=-(1-a_{1})<0$. According to Lemma \ref{lem:3}, for each fixed
$c\geq2\sqrt{1-a_{1}}$, system (\ref{eq:3.1}) has a unique (up to
a translation of the origin) traveling wave solution $\bar{u}(\xi)$
satisfying the given boundary conditions. Define 
\begin{equation}
\left(\begin{array}{c}
\bar{u}(\xi)\\
\\
\bar{v}(\xi)\\
\\
\bar{w}(\xi)
\end{array}\right)=\left(\begin{array}{c}
\bar{u}(\xi)\\
\\
\bar{u}(\xi)\\
\\
\bar{u}(\xi)
\end{array}\right),\quad\xi\in\mathbb{R},\label{eq:3.2}
\end{equation}
 we have the following result, 
\begin{lem}
\label{lem:5}Assume the conditions H1 and H2a, for each fixed $c\geq2\sqrt{1-a_{1}}$,
(\ref{eq:3.2}) is a smooth upper solution for system (\ref{eq:1.11}). \end{lem}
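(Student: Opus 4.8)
The plan is to substitute the candidate (\ref{eq:3.2}), in which all three components equal the KPP wave $\bar{u}$, directly into the three defining inequalities (\ref{eq:1.12}) and to reduce each one, via the profile equation (\ref{eq:3.1}), to an expression whose sign is controlled by the structural hypotheses H1 and H2a. The one fact I would record at the outset is that the wave furnished by Lemma \ref{lem:3} is monotone from $0$ to $1$, so that $0\le\bar{u}(\xi)\le1$ for all $\xi\in\mathbb{R}$, and that (\ref{eq:3.1}) rewrites as $\bar{u}''-c\bar{u}'=-(1-a_{1})\bar{u}(1-\bar{u})$. Since $1-a_{1}>0$ by H1 and $\bar{u}(1-\bar{u})\ge0$ on $[0,1]$, this already gives $\bar{u}''-c\bar{u}'\le0$, a quantity that will reappear in every component.

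For the first inequality I would substitute $\bar{w}=\bar{u}$ into $\bar{u}''-c\bar{u}'+\bar{u}(1-a_{1}-\bar{u}+a_{1}\bar{w})$; the reaction term collapses to $(1-a_{1})\bar{u}(1-\bar{u})$, so the whole expression is exactly the left side of (\ref{eq:3.1}) and vanishes identically. For the third inequality I would use $\bar{v}=\bar{w}=\bar{u}$, so the coupling $(\bar{v}-\bar{w})/\tau$ is zero and the expression reduces to $\bar{u}''-c\bar{u}'=-(1-a_{1})\bar{u}(1-\bar{u})\le0$ by the observation above. The only component carrying genuine content is the second: with $\bar{v}=\bar{u}$ the term $r(1-\bar{v})(a_{2}\bar{u}-\bar{v})$ becomes $r(a_{2}-1)\bar{u}(1-\bar{u})$, and adding $\bar{u}''-c\bar{u}'$ yields $\bar{u}(1-\bar{u})\,[\,r(a_{2}-1)-(1-a_{1})\,]$.

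This last bracket is where condition H2a enters: $r(a_{2}-1)<1-a_{1}$ makes the bracket negative, and since $\bar{u}(1-\bar{u})\ge0$ the product is $\le0$, so the $v$-inequality holds. The boundary conditions in (\ref{eq:1.13}) are immediate, since $\bar{u}(\pm\infty)\in\{0,1\}$ forces $(\bar{u},\bar{v},\bar{w})(-\infty)=(0,0,0)$ and $(\bar{u},\bar{v},\bar{w})(+\infty)=(1,1,1)$ with equality, and smoothness is inherited from $\bar{u}\in C^{2}(\mathbb{R})$. I do not anticipate a real obstacle here; the verification is essentially a substitution followed by sign bookkeeping. The only place demanding care is the $v$-component, which is precisely the role H2a plays: it is the hypothesis guaranteeing the wave lies below the plane $u=v$, and without it the second inequality can fail.
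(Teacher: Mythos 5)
Your proposal is correct and follows essentially the same route as the paper's own proof: direct substitution of $(\bar{u},\bar{u},\bar{u})$ into the three inequalities, with the $u$-component reducing exactly to the KPP profile equation (\ref{eq:3.1}), the $w$-component reducing to $\bar{u}''-c\bar{u}'=-(1-a_{1})\bar{u}(1-\bar{u})\leq0$, and the $v$-component yielding $[r(a_{2}-1)-(1-a_{1})]\bar{u}(1-\bar{u})\leq0$ via H2a. Your explicit record of $0\leq\bar{u}\leq1$ and the sign bookkeeping matches the paper's add-and-subtract computation, so no gap remains.
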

\begin{proof}
On the boundary, one has $(\bar{u},\bar{v},\bar{w})(-\infty)=(0,0,0)$,
$(\bar{u},\bar{v},\bar{w})^{T}(+\infty)=(1,1,1)$.

As for the $u$ component, we have

\[
\begin{array}{ll}
 & \bar{u}''-c\bar{u}'+\bar{u}(1-a_{1}-\bar{u}+a_{1}\bar{w})\\
\\
= & \bar{u}''-c\bar{u}'+\bar{u}(1-a_{1}-\bar{u}+a_{1}\bar{u})\\
\\
= & \bar{u}''-c\bar{u}'+(1-a_{1})\bar{u}(1-\bar{u})\\
\\
= & 0,
\end{array}
\]
and for the $v$ component 
\[
\begin{array}{ll}
 & \bar{v}''-c\bar{v}'+r(1-\bar{v})(a_{2}\bar{u}-\bar{v})\\
\\
= & \bar{u}''-c\bar{u}'+r(1-\bar{u})(a_{2}\bar{u}-\bar{u})\\
\\
+ & (1-a_{1})\bar{u}(1-\bar{u})-(1-a_{1})\bar{u}(1-\bar{u})\\
\\
= & [r(a_{2}-1)-(1-a_{1})]\bar{u}(1-\bar{u})\leq0\\
\\
\end{array}
\]
 due to the condition H2a.

As for the $\bar{w}$ component, 

\[
\begin{array}{ll}
 & \bar{w}''-c\bar{w}'+\frac{1}{\tau}(\bar{v}-\bar{w})\\
\\
= & \bar{u}''-c\bar{u}'+(1-a_{1})\bar{u}(1-\bar{u})-(1-a_{1})\bar{u}(1-\bar{u})\\
\\
= & -(1-a_{1})\bar{u}(1-\bar{u})\leq0.
\end{array}
\]

Thus the conclusion follows.
\end{proof}
We next construct the lower solution pair for system (\ref{eq:1.11}).
For any small but fixed number $l$ with

\begin{equation}
0\leq l<{\displaystyle \frac{ra_{2}}{1-a_{1}+r}},\label{eq:3.3}
\end{equation}
 we choose a number $\bar{l}$ such that 

\begin{equation}
0\leq\bar{l}\leq\frac{l}{(1-a_{1})\tau+1}.\label{eq:3.4}
\end{equation}

We begin with yet another KPP system: 
\begin{equation}
\left\{ \begin{array}{l}
\check{u}''-c\check{u}'+{\displaystyle (1-a_{1})\check{u}(1-\frac{1+l}{1-a_{1}}\check{u})}=0,\\
\\
\check{u}(-\infty)=0,\quad\check{u}(+\infty)={\displaystyle \frac{1-a_{1}}{1+l}}<1.
\end{array}\right.\label{eq:3.5}
\end{equation}
 Corresponding to the notions in Lemma \ref{lem:3}, 
\[
f(\check{u})=(1-a_{1})\check{u}(1-\frac{1+l}{1-a_{1}}\check{u})>0
\]
 for $\check{u}\in(0,{\displaystyle \frac{1-a_{1}}{1+l}})$. $f(0)=f({\displaystyle \frac{1-a_{1}}{1+l}})=0$,
$f'(0)=1-a_{1}>0$, and $f'({\displaystyle \frac{1-a_{1}}{1+l}})<0$.
For each $c\geq2\sqrt{1-a_{1}}$ let $\underline{u}(\xi)$, $\xi\in\mathbb{R}$
be a solution of (\ref{eq:3.5}) and let 
\begin{equation}
\left(\begin{array}{c}
\underline{u}(\xi)\\
\\
\underline{v}(\xi)\\
\\
\underline{w}(\xi)
\end{array}\right)=\left(\begin{array}{c}
\underline{u}(\xi)\\
\\
l\underline{u}(\xi)\\
\\
\bar{l}\underline{u}(\xi)
\end{array}\right),\quad\xi\in\mathbb{R},\label{eq:3.6}
\end{equation}
 we have 
\begin{lem}
\label{lem:6}For each $c\geq2\sqrt{1-a_{1}}$ , (\ref{eq:3.6}) is
a smooth lower solution of system (\ref{eq:1.11}). \end{lem}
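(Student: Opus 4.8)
The plan is to verify directly that the triple in (\ref{eq:3.6}) satisfies the three reversed differential inequalities of Definition~\ref{Def1} together with the reversed boundary conditions. The boundary data are immediate: since $\underline{u}(-\infty)=0$ every component vanishes at $-\infty$, so $(0,0,0)\le(0,0,0)$ holds, while at $+\infty$ the common factor $\underline{u}(+\infty)=\frac{1-a_{1}}{1+l}<1$ forces $\underline{u}$, $\underline{v}=l\underline{u}$, and $\underline{w}=\bar{l}\underline{u}$ all to lie strictly below $1$ (using $\frac{l}{1+l}<1$ and, from (\ref{eq:3.4}), $\bar{l}<l$). The engine of every computation is the same: because $\underline{u}$ solves the KPP equation (\ref{eq:3.5}), I will replace $\underline{u}''-c\underline{u}'$ by $-(1-a_{1})\underline{u}\bigl(1-\frac{1+l}{1-a_{1}}\underline{u}\bigr)$ wherever it appears, after which each inequality collapses to a polynomial in $\underline{u}$ to be shown nonnegative over the range $0\le\underline{u}\le\frac{1-a_{1}}{1+l}$.

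For the $u$-component, substituting $\underline{w}=\bar{l}\underline{u}$ and using (\ref{eq:3.5}) I expect all linear-in-$\underline{u}$ terms to cancel, leaving exactly $(l+a_{1}\bar{l})\underline{u}^{2}$, which is manifestly $\ge0$. The $w$-component is nearly as direct: inserting $\underline{v}=l\underline{u}$, $\underline{w}=\bar{l}\underline{u}$ and (\ref{eq:3.5}) produces a quadratic whose leading term $\bar{l}(1+l)\underline{u}^{2}$ is nonnegative and whose linear coefficient is $\frac{1}{\tau}(l-\bar{l})-\bar{l}(1-a_{1})$; this is nonnegative precisely when $\bar{l}\le\frac{l}{1+(1-a_{1})\tau}$, which is exactly the defining constraint (\ref{eq:3.4}) on $\bar{l}$. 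Both inequalities therefore reduce to bookkeeping plus a single sign check.

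The $v$-component is where I expect the real work. After substitution it becomes $\underline{u}$ times an affine function $g(\underline{u})=[ra_{2}-l(1-a_{1}+r)]+l[1+l(1+r)-ra_{2}]\,\underline{u}$, whose slope need not have a favorable sign, so checking $g\ge0$ only at $\underline{u}=0$ does not suffice. My plan is to exploit that $g$ is affine, so its minimum on $[0,\frac{1-a_{1}}{1+l}]$ is attained at an endpoint and it is enough to check the two endpoints. At $\underline{u}=0$ one gets $ra_{2}-l(1-a_{1}+r)\ge0$, which is exactly condition (\ref{eq:3.3}). At the right endpoint $\underline{u}=\frac{1-a_{1}}{1+l}$ I anticipate a clean cancellation: after clearing the denominator $1+l$, the numerator should collapse to $r(1+la_{1})(a_{2}-l)$, which is positive because (\ref{eq:3.3}) gives $l<\frac{ra_{2}}{1-a_{1}+r}<a_{2}$ (the last step using $a_{1}<1$ from H1). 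Nonnegativity of $g$ at both endpoints then yields $g\ge0$ on the entire range, closing the $v$-inequality. The main obstacle is thus not any single estimate but the algebraic bookkeeping in this endpoint factorization; once the right-endpoint value reduces to $r(1+la_{1})(a_{2}-l)$ the sign is transparent and the lemma follows.
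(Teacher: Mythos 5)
Your proof is correct and follows the same strategy as the paper's: check the reversed boundary conditions, then add and subtract the KPP nonlinearity $(1-a_{1})\underline{u}\bigl(1-\frac{1+l}{1-a_{1}}\underline{u}\bigr)$ in each equation so that every inequality reduces to a sign check on a polynomial in $\underline{u}$; your reduced expressions --- $(l+a_{1}\bar{l})\underline{u}^{2}$ for the $u$-equation, $\underline{u}\,g(\underline{u})$ for the $v$-equation, and the quadratic with linear coefficient $\frac{1}{\tau}(l-\bar{l})-\bar{l}(1-a_{1})$ for the $w$-equation --- agree exactly with the paper's. The one place where you do more than the paper is the $v$-component: the paper declares the affine factor nonnegative ``because of'' the smallness condition on $l$ (citing (\ref{eq:3.5}), evidently a slip for (\ref{eq:3.3})), but that condition by itself only controls the value at $\underline{u}=0$, and the slope $l[1+l(1+r)-ra_{2}]$ can genuinely be negative (e.g. $r$ large with $a_{2}$ close to $1$), so a check over the whole range $0\le\underline{u}\le\frac{1-a_{1}}{1+l}$ is actually required. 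Your endpoint argument supplies exactly this: the factorization $(1+l)\,g\bigl(\tfrac{1-a_{1}}{1+l}\bigr)=r(1+la_{1})(a_{2}-l)>0$ checks out (with positivity from $l<\frac{ra_{2}}{1-a_{1}+r}<a_{2}$, the last inequality using H1), and together with $g(0)=ra_{2}-l(1-a_{1}+r)>0$ and affineness of $g$ it closes the inequality on the full range, making your write-up more complete than the paper's own proof at this step.
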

\begin{proof}
On the boundary, one has 
\[
\left(\begin{array}{c}
\underline{u}(-\infty)\\
\\
\underline{v}(-\infty)\\
\\
\underline{w}(-\infty)
\end{array}\right)=\left(\begin{array}{c}
0\\
\\
0\\
\\
0
\end{array}\right),\quad\left(\begin{array}{c}
\underline{u}(+\infty)\\
\\
\underline{v}(+\infty)\\
\\
\underline{w}(+\infty)
\end{array}\right)=\left(\begin{array}{c}
{\displaystyle \frac{1-a_{1}}{1+l}}\\
\\
l{\displaystyle \frac{1-a_{1}}{1+l}}\\
\\
\bar{l}{\displaystyle \frac{1-a_{1}}{1+l}}
\end{array}\right)<\left(\begin{array}{c}
1\\
\\
1\\
\\
1
\end{array}\right).
\]

Furthermore, for the $u$ component, 
\[
\begin{array}{ll}
 & {\displaystyle \underline{u}''-c\underline{u}'+\underline{u}(1-a_{1}-\underline{u}+a_{1}\underline{w})}\\
\\
= & {\displaystyle \underline{u}''-c\underline{u}'+\underline{u}(1-a_{1}-\underline{u}+a_{1}\bar{l}\underline{u})}\\
\\
 & +(1-a_{1})\underline{u}(1-{\displaystyle \frac{1+l}{1-a_{1}}}\underline{u})-(1-a_{1})\underline{u}(1-{\displaystyle \frac{1+l}{1-a_{1}}}\underline{u})\\
\\
= & \underline{u}[1-a_{1}-\underline{u}+a_{1}\bar{l}\underline{u}-(1-a_{1})+(1+l)\underline{u}]\\
\\
= & \underline{u}^{2}(a_{1}\bar{l}+l)\\
\\
> & 0,
\end{array}
\]
and for the $v$ component, we have

\[
\begin{array}{ll}
 & \underline{v}''-c\underline{v}'+r(1-\underline{v})(a_{2}\underline{u}-\underline{v})\\
\\
= & l[\underline{u}''-c\underline{u}'+{\displaystyle \frac{r}{l}}(1-l\underline{u})(a_{2}\underline{u}-l\underline{u})\\
\\
 & +(1-a_{1})\underline{u}(1-{\displaystyle \frac{1+l}{1-a_{1}}}\underline{u})-(1-a_{1})\underline{u}(1-{\displaystyle \frac{1+l}{1-a_{1}}}\underline{u})]\\
\\
= & l\underline{u}[{\displaystyle \frac{r}{l}}(1-l\underline{u})(a_{2}-l)-(1-a_{1})(1-{\displaystyle \frac{1+l}{1-a_{1}}}\underline{u})]\\
\\
= & l\underline{u}\{{\displaystyle \frac{r}{l}}a_{2}-r-(1-a_{1})+[(1+l)-r(a_{2}-l)]\underline{u}\}\geq0
\end{array}
\]
because of condition (\ref{eq:3.5}).

As for the $w$ component we have 

\[
\begin{array}{ll}
 & \underline{w}''-c\underline{w}'+{\displaystyle \frac{1}{\tau}}(\underline{v}-\underline{w})\\
\\
= & \bar{l}\underline{u}''-c\bar{l}\underline{u}'+{\displaystyle \frac{1}{\tau}}(l\underline{u}-\bar{l}\underline{u})\\
\\
= & \bar{l}\underline{u}[{\displaystyle \frac{1}{\tau\bar{l}}}(l-\bar{l})-(1-a_{1})(1-{\displaystyle \frac{1+l}{1-a_{1}}}\underline{u})]\\
\\
= & \bar{l}\underline{u}[{\displaystyle \frac{1}{\tau\bar{l}}}(l-\bar{l})-(1-a_{1})+(1+l)\bar{u}]>0,
\end{array}
\]
due to the choice of $l$ and $\bar{l}$.

The conclusion of the lemma follows. \end{proof}
\begin{rem}
\label{Remark6}1. For any fixed $c\geq2\sqrt{1-a_{1}}$ and $(\bar{u},\bar{v},\bar{w})^{T}(\xi)$,
$(\underline{u},\underline{v},\underline{w})^{T}(\xi)$, $\xi\in\mathbb{R}$
the respectively upper and lower solutions defined in (\ref{eq:3.2})
and (\ref{eq:3.4}), we have relation $(\bar{u},\bar{v},\bar{w})^{T}(\xi)>(\underline{u},\underline{v},\underline{w})^{T}(\xi)$
for $\xi\in\mathbb{R}.$  In fact $\underline{u}(\xi)=\frac{1-a_{1}}{1+l}\bar{u}(\xi)<\bar{u}(\xi)$,
$\xi\in\mathbb{R}$ where $\underline{u}(\xi)$ and $\bar{u}(\xi)$
are the solutions of (\ref{eq:3.5}) and (\ref{eq:3.1}) respectively,
it then follows $\underline{v}(\xi)=l\underline{u}(\xi)<\bar{u}(\xi)=\bar{v}(\xi)$
and $\underline{w}(\xi)=\bar{l}\underline{u}(\xi)<\bar{u}(\xi)=\bar{w}(\xi)$.

2. The construction of lower solution in Lemma 5 also applies to the
case H1-H2b, where we simply require the condition $0<l<{\displaystyle \frac{ra_{2}}{1-a_{1}+r}}$
to be replaced by $0\leq l<{\displaystyle \min\{\frac{ra_{2}}{1-a_{1}+r}},1\}.$
\end{rem}

\subsection{\label{sub:3.2}Ordered upper and lower solutions under conditions
H1, H2b and H3.}

The following estimates of the solutions of system (\ref{eq:2.6})
is needed in the construction of the upper and lower solutions.
\begin{lem}
\label{lem:LowerH2b}Assume the conditions of Lemma \ref{lem:4}.
Let $(u(\xi),v(\xi))$, $\xi\in\mathbb{R}$, be a solution of (\ref{eq:2.6})
for a fixed $c\geq2\sqrt{1-a_{1}}$, then there exists a constant
$a_{2}^{*}>0$ such that for $a_{2}\geq a_{2}^{*}$, $a_{2}u(\xi)\geq v(\xi)$
for $\xi\in\mathbb{R}$.\end{lem}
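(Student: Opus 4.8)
The plan is to run a maximum-principle (negative–minimum) argument on the scalar quantity $z(\xi):=a_{2}u(\xi)-v(\xi)$, so that the assertion $a_{2}u\ge v$ becomes simply $z\ge0$ on $\mathbb{R}$. By Lemma~\ref{lem:4} the solution $(u,v)$ is strictly increasing with $0<u,v<1$ on $\mathbb{R}$ and endpoints $(0,0)$ and $(1,1)$; consequently $z(\xi)\to0$ as $\xi\to-\infty$ and $z(\xi)\to a_{2}-1>0$ as $\xi\to+\infty$, the latter using $a_{2}>1$ from H1. The advantage of invoking these two limits rather than the finer $-\infty$ asymptotics of Lemma~\ref{lem:4} is that I need not determine the sign of $z$ near $-\infty$: since $z$ is continuous, bounded, and nonnegative in the limit at both ends, if $\inf_{\mathbb{R}}z<0$ then that infimum cannot escape to $\pm\infty$ and must be attained at some finite interior point $\xi_{0}$, where $z'(\xi_{0})=0$ and $z''(\xi_{0})\ge0$.

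The heart of the argument is the sign computation at $\xi_{0}$. Using the two equations of (\ref{eq:2.6}) (reading the cross term in the $u$–equation as $a_{1}v$ in this two–species reduction), namely $u''-cu'=-u(1-a_{1}-u+a_{1}v)$ and $v''-cv'=-r(1-v)(a_{2}u-v)$, one obtains
\[
z''-cz'=-a_{2}u\,(1-a_{1}-u+a_{1}v)+r(1-v)(a_{2}u-v).
\]
At the negative minimum $\xi_{0}$ one has $a_{2}u-v=z(\xi_{0})<0$ and $1-v>0$, so the second term is strictly negative. For the first term I want $1-a_{1}-u+a_{1}v>0$, and this is exactly where Condition~H3 enters: taking $a_{2}^{*}:=1/a_{1}$ (so that $a_{1}a_{2}\ge1$ for $a_{2}\ge a_{2}^{*}$), the inequality $v>a_{2}u$ gives $a_{1}v>a_{1}a_{2}u\ge u$, hence $1-a_{1}-u+a_{1}v>1-a_{1}>0$, making the first term strictly negative as well. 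Thus $z''(\xi_{0})=z''(\xi_{0})-cz'(\xi_{0})<0$, contradicting $z''(\xi_{0})\ge0$. Therefore $\inf_{\mathbb{R}}z\ge0$ and $a_{2}u\ge v$ throughout $\mathbb{R}$.

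I expect the only genuinely delicate point to be the compactness step: one must argue that a negative infimum of $z$ is realized at a finite point and is an interior extremum, which rests on the two boundary limits above together with $a_{2}-1>0$, so that no minimizer can leak to $\pm\infty$. A secondary caveat is that the stated hypothesis ``$a_{2}$ suitably large'' looks stronger than this route requires: the sign $1-a_{1}-u+a_{1}v>0$ in fact already follows from $v>u$ (which holds automatically at $\xi_{0}$, since $z(\xi_{0})<0$ and $a_{2}>1$ force $u<a_{2}u<v$), giving $1-a_{1}-u+a_{1}v>(1-a_{1})(1-u)>0$ with no lower bound on $a_{2}$ beyond H1. If one instead prefers an argument built on a global ratio bound $v\le Cu$ extracted by squeezing $(u,v)$ between the lower and upper solutions of Lemma~\ref{lem:4}, then $a_{2}\ge C$ is genuinely needed; that is presumably the sense in which H3 is used, and I would note this alternative but carry out the maximum-principle proof as the cleaner one.
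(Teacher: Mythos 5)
Your proof is correct, but it takes a genuinely different route from the paper's. The paper's own proof is a two-line asymptotic comparison: by Lemma \ref{lem:4} (estimates (\ref{eq:2.8})--(\ref{eq:2.9})) the components $u$ and $v$ have the same first-order exponential (or $\xi e^{\lambda\xi}$) behavior at $-\infty$, and $a_{2}u>v$ near $+\infty$ since both components tend to $1$ and $a_{2}>1$; hence the ratio $v/u$ is continuous on $\mathbb{R}$ with finite limits at both ends, so it is bounded, and one takes $a_{2}^{*}=\sup_{\mathbb{R}}v(\xi)/u(\xi)$ --- this is exactly the ``global ratio bound'' alternative you flag in your last paragraph. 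Your negative-minimum argument on $z=a_{2}u-v$ is sound: the identity $z''-cz'=-a_{2}u(1-a_{1}-u+a_{1}v)+r(1-v)(a_{2}u-v)$ is right (and you correctly read the $a_{1}w$ in the first equation of (\ref{eq:2.6}) as a typo for $a_{1}v$), the attainment of a negative infimum at a finite interior point follows from $z(-\infty)=0$ and $z(+\infty)=a_{2}-1>0$, and at such a point $v(\xi_{0})>a_{2}u(\xi_{0})>u(\xi_{0})$ together with $0<u,v<1$ (strict, by the strict monotonicity in Lemma \ref{lem:4}) makes both terms on the right strictly negative, contradicting $z''(\xi_{0})\geq0$. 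The comparison is instructive: the paper's route is essentially free once Lemma \ref{lem:4} is in hand, but its $a_{2}^{*}=\sup v/u$ depends on the wave $(u,v)$, which itself depends on $a_{2}$, so the existence of a single threshold is left implicit; your argument sidesteps this circularity and proves strictly more --- the inequality holds for \emph{every} $a_{2}>1$, so condition H3 is not actually needed for this lemma --- and it treats critical and noncritical speeds uniformly without invoking the refined expansions at $-\infty$. A consistency check supports your stronger conclusion: linearizing (\ref{eq:2.6}) at the origin forces $A_{2}=ra_{2}A_{1}/(r+1-a_{1})<a_{2}A_{1}$, so $a_{2}u-v>0$ near $-\infty$ for any $a_{2}>1$, in agreement with your result.
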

\begin{proof}
Noting that $u(\xi)$ and $v(\xi)$ have the exactly same (up to the
first order) exponential decay rate at $-\infty$ , and at $+\infty$
we have $a_{2}u(\xi)>v(\xi)$. The rest of the proof follows easily.
\end{proof}
We now set the upper and lower solution pairs for system (\ref{eq:1.11}). 
\begin{lem}
\label{lem:UpperH2b}Let the parameters satisfy H1 (\ref{Condition H1})
and H2b (\ref{CoditionH2b}), then (\ref{eq:3.6}) consists of a lower
solution for (\ref{eq:1.11}).\end{lem}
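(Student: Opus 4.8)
The plan is to mirror the verification in the proof of Lemma~\ref{lem:6} essentially verbatim, since the lower-solution ansatz (\ref{eq:3.6}) is unchanged; the sole modification is that the slope $l$ is now taken from the range furnished by part~2 of Remark~\ref{Remark6}, namely $0\le l<\min\{\frac{ra_2}{1-a_1+r},\,1\}$, with $\bar l$ chosen as in (\ref{eq:3.4}). First I would record the boundary values $(\underline u,\underline v,\underline w)(-\infty)=(0,0,0)$ and $(\underline u,\underline v,\underline w)(+\infty)=\big(\frac{1-a_1}{1+l},\,l\frac{1-a_1}{1+l},\,\bar l\frac{1-a_1}{1+l}\big)<(1,1,1)$, so that the boundary inequalities required of a lower solution hold.

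Next I would dispatch the $u$- and $w$-components, neither of which is sensitive to the distinction between H2a and H2b. Inserting $\underline w=\bar l\underline u$ and using the KPP equation (\ref{eq:3.5}) to eliminate $\underline u''-c\underline u'$ collapses the $u$-inequality to $\underline u^{2}(a_1\bar l+l)>0$. For the $w$-component, substituting $\underline v=l\underline u$ and $\underline w=\bar l\underline u$ and again invoking (\ref{eq:3.5}) reduces the inequality to $\bar l\underline u\big[\frac{1}{\tau\bar l}(l-\bar l)-(1-a_1)+(1+l)\underline u\big]>0$, which is guaranteed by the choice (\ref{eq:3.4}) of $\bar l$, exactly as before.

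The substantive step is the $v$-component, and this is where H2b enters. Writing $\underline v=l\underline u$ and using (\ref{eq:3.5}) to replace $\underline u''-c\underline u'$ reduces the desired inequality to
\[\Big[\frac{ra_2}{l}-r-(1-a_1)\Big]+\big[(1+l)-r(a_2-l)\big]\,\underline u\ \ge\ 0\]
for every $\xi$, that is, for all $\underline u\in\big(0,\frac{1-a_1}{1+l}\big)$. The constant term is positive precisely by $l<\frac{ra_2}{1-a_1+r}$. Under H2a the bracket multiplying $\underline u$ took care of itself; under H2b, however, condition H3 makes $a_2$ large, so the slope $(1+l)-r(a_2-l)$ is now negative and the left-hand side is decreasing in $\underline u$. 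Its minimum over the admissible range is therefore attained at the right endpoint $\underline u=\frac{1-a_1}{1+l}$, and I would evaluate there; after clearing denominators the endpoint value simplifies to $r\,\frac{(a_2-l)(1+a_1 l)}{l(1+l)}\ge 0$, which is nonnegative because the additional restriction $l<1$ together with H1 (giving $a_2>1$) yields $a_2-l>0$.

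The main obstacle is exactly this sign reversal in the coefficient of $\underline u$: unlike the H2a situation, nonnegativity cannot be read off from the value at $\underline u=0$, and one must instead test the right endpoint. The role of the extra constraint $l<1$ in Remark~\ref{Remark6}(2) is precisely to render that endpoint value nonnegative, and confirming the algebraic collapse to $(a_2-l)(1+a_1 l)\ge 0$ is the crux, if routine, computation. Assembling the three differential inequalities with the boundary conditions then delivers the claim.
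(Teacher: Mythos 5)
Your proposal is correct and takes essentially the same approach as the paper, whose entire proof is the one-line remark that the verification is similar to that of Lemma \ref{lem:6}; your component-by-component computation, culminating in the endpoint evaluation $r\,\frac{(a_{2}-l)(1+a_{1}l)}{l(1+l)}\geq0$ for the $v$-inequality, is exactly the verification the paper leaves implicit. One minor over-attribution: since $l<\frac{ra_{2}}{1-a_{1}+r}<a_{2}$ (because $1-a_{1}>0$), the sign $a_{2}-l>0$ is automatic, so neither the restriction $l<1$ nor H3 is actually needed for the endpoint value, and your affine-in-$\underline{u}$ argument (nonnegativity at both endpoints) already covers either sign of the coefficient $(1+l)-r(a_{2}-l)$ under H1 and H2b alone.
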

\begin{proof}
Similar to the proof of Lemma \ref{lem:6} so we skip it.
\end{proof}
We next set up the upper solution for (\ref{eq:1.11}). 

For each fixed $c\geq2\sqrt{1-a_{1}}$ let $(\hat{u},\hat{v})(\xi)$
be the correspondingly the unique solution of (\ref{eq:3.41}), and
we define the function

\begin{equation}
(\bar{u},\bar{v},\bar{w})(\xi)=(\hat{u},\hat{v},\hat{v})(\xi),\qquad\xi\in\mathbb{R}\label{eq:3.7}
\end{equation}
We then have 
\begin{lem}
\label{lem:a_{2}u}Let the parameters satisfy H1 (\ref{Condition H1}),
H2b (\ref{CoditionH2b}) and H3 (\ref{eq:ConditionH3}), and $a_{2}^{*}$
be given in Lemma \ref{lem:LowerH2b}, then for $a_{2}\geq a_{2}^{*}$
(\ref{eq:3.7}) defines an upper solution of (\ref{eq:1.11}).\end{lem}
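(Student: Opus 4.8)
The plan is to verify directly that the triple $(\bar{u},\bar{v},\bar{w})=(\hat{u},\hat{v},\hat{v})$ satisfies the three differential inequalities (\ref{eq:1.12}) together with the boundary conditions (\ref{eq:1.13}). The boundary conditions are immediate: since $(\hat{u},\hat{v})$ is the wave solution of the rescaled two-species system (\ref{eq:2.6}), we have $(\bar{u},\bar{v},\bar{w})(-\infty)=(0,0,0)$ and $(\bar{u},\bar{v},\bar{w})(+\infty)=(1,1,1)$, so (\ref{eq:1.13}) holds with equality.

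For the first two inequalities I expect exact equalities rather than strict ones. Substituting $\bar{u}=\hat{u}$ and $\bar{w}=\hat{v}$ into the $u$-operator of (\ref{eq:1.12}) produces $\hat{u}_{\xi\xi}-c\hat{u}_{\xi}+\hat{u}(1-a_{1}-\hat{u}+a_{1}\hat{v})$, which is precisely the left-hand side of the first equation of (\ref{eq:2.6}) and therefore vanishes. Likewise, substituting $\bar{v}=\hat{v}$ and $\bar{u}=\hat{u}$ into the $v$-operator reproduces the second equation of (\ref{eq:2.6}) and vanishes. Thus both inequalities hold, using nothing beyond the fact that $(\hat{u},\hat{v})$ is a genuine solution of the two-species system; the identification $\bar{w}=\hat{v}$ in the first equation is exactly what makes the $u$-component close up.

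The only real work is the $w$-component, and this is the step I regard as the heart of the argument. The crucial feature of the construction is that, because $\bar{w}=\bar{v}=\hat{v}$, the coupling term $\frac{1}{\tau}(\bar{v}-\bar{w})$ cancels identically, leaving
\[
\bar{w}_{\xi\xi}-c\bar{w}_{\xi}+\frac{1}{\tau}(\bar{v}-\bar{w})=\hat{v}_{\xi\xi}-c\hat{v}_{\xi}.
\]
I then invoke the second equation of (\ref{eq:2.6}) to replace $\hat{v}_{\xi\xi}-c\hat{v}_{\xi}$ by $-r(1-\hat{v})(a_{2}\hat{u}-\hat{v})$, so the required inequality $\bar{w}_{\xi\xi}-c\bar{w}_{\xi}+\frac{1}{\tau}(\bar{v}-\bar{w})\leq0$ is equivalent to $r(1-\hat{v})(a_{2}\hat{u}-\hat{v})\geq0$. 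Since the wave is monotone between the equilibria we have $0\leq\hat{v}\leq1$, so the factors $r>0$ and $1-\hat{v}\geq0$ are nonnegative, and the whole matter reduces to the single pointwise sign condition $a_{2}\hat{u}(\xi)\geq\hat{v}(\xi)$ for all $\xi\in\mathbb{R}$.

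This is exactly where Hypothesis H3 enters: the estimate $a_{2}\hat{u}\geq\hat{v}$ is supplied by Lemma \ref{lem:LowerH2b}, which guarantees $a_{2}u(\xi)\geq v(\xi)$ on all of $\mathbb{R}$ once $a_{2}\geq a_{2}^{*}$. Applying it with $a_{2}\geq a_{2}^{*}$ therefore completes the verification of the $w$-inequality and finishes the proof. The anticipated obstacle is thus not any fresh computation but the availability of the pointwise comparison $a_{2}u\geq v$; the construction is deliberately arranged so that the delay term vanishes and the remaining requirement is precisely the content of Lemma \ref{lem:LowerH2b}.
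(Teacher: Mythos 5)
Your proposal is correct and is exactly the argument the paper intends: its one-line proof (``follows easily from the fact that $(\hat{u},\hat{v})$ solves (\ref{eq:2.6}) and Lemma \ref{lem:LowerH2b}'') is precisely your verification that the first two inequalities of (\ref{eq:1.12}) hold with equality since $(\hat{u},\hat{v})$ solves the two-species system, while setting $\bar{w}=\bar{v}=\hat{v}$ kills the coupling term $\frac{1}{\tau}(\bar{v}-\bar{w})$ and reduces the third inequality to $r(1-\hat{v})(a_{2}\hat{u}-\hat{v})\geq0$, which Lemma \ref{lem:LowerH2b} guarantees for $a_{2}\geq a_{2}^{*}$. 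You have simply written out the details the paper omits, including the correct reading that $(\hat{u},\hat{v})$ solves (\ref{eq:2.6}) (the paper's reference to (\ref{eq:3.41}) there is a typo).
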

\begin{proof}
The proof follows easily from the fact that $(\hat{u},\hat{v})(\xi)$,
$\xi\in\mathbb{R}$ solves (\ref{eq:2.6}) and Lemma \ref{lem:LowerH2b}.
\end{proof}
We next show that such constructed upper and lower solutions are ordered.
The following generalized version of sliding domain method (\cite{Berestycki})
is needed.
\begin{prop}
\label{lem:Sliding11}Let two $C^{2}$ vector functions $\bar{U}(\xi)=(\bar{u}_{1}(\xi),\bar{u}_{2}(\xi),...,\bar{u}_{n}(\xi))$
and $\underline{U}(\xi)=(\underline{u}_{1}(\xi),\underline{u}_{2}(\xi),...,\underline{u}_{n}(\xi))$
satisfy the following inequalities:

\begin{equation}
D\bar{U}''-c\bar{U}'+F(U)\leq0\leq D\underline{U}''-c\underline{U}'+F(\underline{U})\quad\mbox{for\,}\,\xi\in[-N,N]\label{eq:3.8}
\end{equation}
and

\begin{equation}
\underline{U}(-N)<\bar{U}(\xi)\quad\mbox{for\,}\,\xi\in(-N,N],\label{eq:3.9}
\end{equation}

\begin{equation}
\underline{U}(\xi)<\bar{U}(N)\quad\mbox{for\,}\,\xi\in[-N,N),\label{eq:3.10}
\end{equation}
where $D$ is a diagonal matrix with positive entries $D_{i}$, $i=1,2...n$,
$F(U)=(F_{1}(U),...,F_{n}(U))$ is $C^{1}$ with respect to its components
and $\frac{\partial F_{i}}{\partial u_{j}}\geq0$ for $i\neq j$,
$i,j=1,2...n$, then 

\begin{equation}
\underline{U}(\xi)\leq\bar{U}(\xi),\qquad\xi\in[-N,N].\label{eq:3.11}
\end{equation}
\end{prop}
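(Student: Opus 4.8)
The plan is to prove \eqref{eq:3.11} by a sliding argument, exploiting the fact that the operator $U \mapsto DU'' - cU' + F(U)$ is autonomous: if $\underline{U}$ satisfies the right-hand inequality in \eqref{eq:3.8} on $[-N,N]$, then every translate $\underline{U}^{t}(\xi) := \underline{U}(\xi - t)$ satisfies the same inequality on its translated domain $[-N+t,\,N+t]$, and likewise for $\bar{U}$. For $t \in [0,2N]$ the two domains overlap in $I_t := [-N+t,\,N]$, and I would compare $\underline{U}^{t}$ with $\bar{U}$ on $I_t$, sliding $t$ down from $2N$ to $0$; the case $t = 0$, where $I_0 = [-N,N]$, is exactly the assertion $\underline{U} \le \bar{U}$.

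First I would record the geometry at the two endpoints of $I_t$. At the right endpoint $\xi = N$ one has $\underline{U}^{t}(N) = \underline{U}(N-t)$, and since $N - t \in [-N,N)$ for $t \in (0,2N]$, the strict hypothesis \eqref{eq:3.10} gives $\underline{U}^{t}(N) < \bar{U}(N)$. At the left endpoint $\xi = -N+t$ one has $\underline{U}^{t}(-N+t) = \underline{U}(-N)$, and since $-N+t \in (-N,N]$, the strict hypothesis \eqref{eq:3.9} gives $\underline{U}^{t}(-N+t) < \bar{U}(-N+t)$. Thus for every $t \in (0,2N]$ the comparison is componentwise strict at both ends of $I_t$; in particular at $t = 2N$, where $I_{2N} = \{N\}$ degenerates to a point, $\underline{U}^{2N} < \bar{U}$ holds outright. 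Let $t^{*} = \inf\{\, t \in [0,2N] : \underline{U}^{s} \le \bar{U} \text{ on } I_s \text{ for all } s \in [t,2N]\,\}$. By continuity the defining set is closed, so $\underline{U}^{t^{*}} \le \bar{U}$ on $I_{t^{*}}$, and it suffices to show $t^{*} = 0$.

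Suppose $t^{*} > 0$. Writing $Z = \bar{U} - \underline{U}^{t^{*}} \ge 0$ and subtracting the two inequalities in \eqref{eq:3.8}, the mean value theorem gives, componentwise,
\[
D Z'' - c Z' + C(\xi)\, Z \le 0 \quad \text{on } I_{t^{*}}, \qquad C_{ij}(\xi) = \int_0^1 \frac{\partial F_i}{\partial u_j}\bigl(\underline{U}^{t^{*}} + s Z\bigr)\, ds,
\]
where $C_{ij} \ge 0$ for $i \ne j$ by cooperativity and every entry is bounded on the compact set $I_{t^{*}}$. Because the endpoint inequalities above are strict, any component of $Z$ that vanishes must do so at an interior point of $I_{t^{*}}$. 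At such a point the strong maximum principle forces that component to vanish identically on $I_{t^{*}}$, hence in particular at an endpoint, contradicting the strict endpoint inequality. Therefore $Z > 0$ on $\overline{I_{t^{*}}}$ in every component, and by continuity of $\underline{U}^{t}$ in $t$ the inequality $\underline{U}^{t} \le \bar{U}$ on $I_t$ persists for $t$ slightly below $t^{*}$, contradicting the definition of $t^{*}$. Hence $t^{*} = 0$ and \eqref{eq:3.11} follows.

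The main obstacle is that the diagonal coefficients $C_{ii}$ need not have a sign, so the scalar strong maximum principle is not directly available for each component; indeed a naive comparison argument fails precisely here. I would remove the difficulty by the standard shift: choose a constant $\lambda \ge \sup_{i,\xi} C_{ii}(\xi)$ (finite on the compact overlap) and rewrite the $i$-th line as $D_i Z_i'' - c Z_i' + (C_{ii}-\lambda) Z_i \le -\sum_{j \ne i} C_{ij} Z_j - \lambda Z_i \le 0$, the last step using $Z \ge 0$, $C_{ij} \ge 0$ for $j \ne i$, and $\lambda \ge 0$. The scalar operator now has nonpositive zeroth-order coefficient $C_{ii} - \lambda \le 0$, so the Hopf lemma and the strong maximum principle apply and justify the interior-vanishing and propagation step used above. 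A secondary point to verify is the degenerate start at $t = 2N$ together with the continuity in $t$ of the quantities defining $t^{*}$, both of which are routine since $\underline{U}$ and $\bar{U}$ are $C^{2}$ on compact sets.
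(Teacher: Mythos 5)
Your proof is correct and is essentially the same sliding argument as the paper's: the paper translates $\bar{U}$ to the left by $\mu\in[0,2N]$ and decreases $\mu$ from $2N$, using (\ref{eq:3.9})--(\ref{eq:3.10}) for strict inequalities at the endpoints of the overlap and the cooperativity condition plus the maximum principle to rule out a first interior touching, which is the mirror image of your translating $\underline{U}$ to the right and decreasing $t^{*}$ to zero. Your $\lambda$-shift to make the zeroth-order coefficient nonpositive is a careful justification of the maximum-principle step that the paper simply invokes without comment, but it does not change the route.
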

\begin{proof}
We adapt the proof of \cite{Berestycki}. Shift $\bar{U}(\xi)$ to
the left, for $0\leq\mu\leq2N$, consider $\bar{U}^{\mu}(\xi):=\bar{U}(\xi+\mu)$
on the interval $(-N-\mu,N-\mu)$. On both ends of the interval, by
(\ref{eq:3.9}) and (\ref{eq:3.10}), we have

\begin{equation}
\underline{U}(\xi)<\bar{U}^{\mu}(\xi).\label{eq:3.12}
\end{equation}
Starting from $\mu=2N$, decreasing $\mu$, for every $\mu$ in $0<\mu<2N$,
the inequality (\ref{eq:3.12}) is true on the end points of the respective
interval. On decreasing $\mu$, suppose that there is a first $\mu$
with $0<\mu<2N$ such that

\[
\underline{U}(\xi)\leq\bar{U}^{\mu}(\xi)\quad\xi\in(-N-\mu,N-\mu)
\]
and there is one component, for example the $i-th$, such that the
equality holds on a point $\xi_{1}$ inside the interval. Let $W(\xi)=(w_{1}(\xi),w_{2}(\xi),...,w_{n}(\xi))=\bar{U}^{r}(\xi)-\underline{U}(\xi)$,
then $w_{i}(\xi)$, $i=1,2,...,n$ satisfies 

\[
\left\{ \begin{array}{l}
D_{i}w_{i}''-cw_{i}'+\frac{\partial F_{i}}{\partial u_{i}}w_{i}\leq D_{i}w_{i}''-cw_{i}'+\Sigma_{j=1}^{n}\frac{\partial F_{i}}{\partial u_{j}}w_{j}\leq0,\\
\\
w_{i}(\xi_{1})=0,\; w_{j}(\xi)\geq0\:\mbox{\,\ f\mbox{or}}\:\xi\in[-N-\mu,N-\mu],
\end{array}\right.
\]
the Maximum principle further implies that $w_{i}\equiv0$ for $\xi\in[-N-\mu,N-\mu]$,
but this is in contradiction with (\ref{eq:3.12}) on the boundary
points $\xi=-N-\mu$ and $\xi=N-\mu$. So we can decrease $\mu$ all
the way to zero. This proves the conclusion.
\end{proof}
We next shift the upper solution obtained in Lemma \ref{lem:a_{2}u}
far to the left to achieve the orderness between the upper and lower
solutions.
\begin{lem}
Let $c\geq2\sqrt{1-a_{1}}$ be fixed and $(\bar{u},\bar{v},\bar{w})(\xi)$,
$(\underline{u},\underline{v},\underline{w})(\xi)$ are the corresponding
upper and lower solutions derived in Lemma \ref{lem:a_{2}u} and Lemma
\ref{lem:LowerH2b}, then there exists a $\eta_{0}\geq0$ such that
for all $\eta\geq\eta_{0}$ we have 

\[
(\bar{u},\bar{v},\bar{w})(\xi+\eta)\geq(\underline{u},\underline{v},\underline{w})(\xi),\quad\xi\in\mathbb{R}.
\]
\end{lem}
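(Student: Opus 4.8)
The plan is to combine the matched exponential decay rates at $-\infty$ (Lemma \ref{lem:3} for the KPP wave $\underline{u}$ and Lemma \ref{lem:4} for the Lotka--Volterra wave $(\hat{u},\hat{v})$) with the sliding-domain comparison Proposition \ref{lem:Sliding11}, applied to the translated upper solution $\bar{U}^{\eta}(\xi):=(\bar{u},\bar{v},\bar{w})(\xi+\eta)$. Because the system (\ref{eq:1.11}) is autonomous, $\bar{U}^{\eta}$ satisfies the same upper-solution differential inequalities (\ref{eq:1.12}) for every $\eta$, so the structural hypotheses of Proposition \ref{lem:Sliding11} (quasimonotonicity $\partial F_{i}/\partial u_{j}\geq 0$ for $i\neq j$, and $D=I$) are precisely the ones that make (\ref{eq:1.11}) monotone and are available throughout. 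Note also that since $\bar{l}\leq l$ by (\ref{eq:3.4}), the $w$-comparison $\hat{v}(\xi+\eta)\geq\bar{l}\,\underline{u}(\xi)$ is implied by the $v$-comparison $\hat{v}(\xi+\eta)\geq l\,\underline{u}(\xi)$, so only the $u$- and $v$-components really need attention.

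First I would settle the two tails. At $+\infty$ the lower solution is strictly increasing with $\underline{U}(+\infty)=\frac{1-a_{1}}{1+l}(1,l,\bar{l})<(1,1,1)$ componentwise, while $\bar{U}(+\infty)=(1,1,1)$; hence there is $\xi_{+}>0$, independent of $\eta\geq 0$, with $\bar{U}(\xi_{+})>\underline{U}(+\infty)>\underline{U}(\xi)$ for $\xi<\xi_{+}$, and therefore $\bar{U}^{\eta}(\xi)\geq\bar{U}(\xi)\geq\bar{U}(\xi_{+})>\underline{U}(\xi)$ for all $\xi\geq\xi_{+}$ and all $\eta\geq 0$ by monotonicity of $\hat{u},\hat{v}$. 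At $-\infty$ I would use the asymptotics: since $\underline{u}$ and $(\hat{u},\hat{v})$ share the leading rate $e^{\lambda\xi}$ with $\lambda=\frac{c-\sqrt{c^{2}-4(1-a_{1})}}{2}$ (or the prefactor $\xi e^{\sqrt{1-a_{1}}\xi}$ at the critical speed), the ratios $\hat{u}(\xi+\eta)/\underline{u}(\xi)$ and $\hat{v}(\xi+\eta)/(l\,\underline{u}(\xi))$ tend, as $\xi\rightarrow-\infty$, to positive constants multiplied by $e^{\lambda\eta}$. Choosing $\eta_{0}$ so large that both limiting constants exceed $1$ for $\eta\geq\eta_{0}$ produces a threshold $\xi_{-}<0$ with $\bar{U}^{\eta}(\xi)>\underline{U}(\xi)$ for all $\xi\leq\xi_{-}$.

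With both tails controlled, the remaining task is the compact window $[\xi_{-},\xi_{+}]$, and this is where Proposition \ref{lem:Sliding11} enters. On $[\xi_{-},\xi_{+}]$ the strict inequality $\bar{U}^{\eta}(\xi_{-})>\underline{U}(\xi_{-})$ from the left-tail step, together with the strict monotonicity of $\hat{u},\hat{v}$, yields (\ref{eq:3.9}): $\underline{U}(\xi_{-})<\bar{U}^{\eta}(\xi)$ for $\xi\in(\xi_{-},\xi_{+}]$; likewise $\bar{U}^{\eta}(\xi_{+})>\underline{U}(+\infty)>\underline{U}(\xi)$ gives (\ref{eq:3.10}): $\underline{U}(\xi)<\bar{U}^{\eta}(\xi_{+})$ for $\xi\in[\xi_{-},\xi_{+})$. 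Proposition \ref{lem:Sliding11} then delivers $\underline{U}(\xi)\leq\bar{U}^{\eta}(\xi)$ on $[\xi_{-},\xi_{+}]$, and gluing this with the two tails proves $\bar{U}(\xi+\eta)\geq\underline{U}(\xi)$ for all $\xi\in\mathbb{R}$ and all $\eta\geq\eta_{0}$.

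I expect the main obstacle to be the left tail: from the first-order asymptotics (\ref{eq:2.2})--(\ref{eq:2.5}) and (\ref{eq:2.8})--(\ref{eq:2.9}) one must extract a single shift $\eta_{0}$ and a threshold $\xi_{-}$ for which strict domination holds uniformly on the entire ray $(-\infty,\xi_{-}]$, absorbing the subleading $o(\cdot)$ terms. At the critical speed $c=2\sqrt{1-a_{1}}$ this is slightly delicate, since one compares $\big(A_{12c}(\xi+\eta)\big)/\big(d_{c}\xi\big)$ rather than a pure exponential quotient; here both $A_{12c}$ and $d_{c}$ are negative, so the limiting ratio $\frac{A_{12c}}{d_{c}}e^{\sqrt{1-a_{1}}\eta}$ is positive and can be pushed above $1$ by enlarging $\eta$. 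Everything else---the $+\infty$ end and the quasimonotone maximum-principle step inside Proposition \ref{lem:Sliding11}---is routine once the decay rates at $-\infty$ are known to coincide.
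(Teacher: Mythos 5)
Your proposal is correct and follows essentially the same route as the paper's proof: it establishes strict domination on a right ray from the gap between the limits $(1,1,1)$ and $\frac{1-a_{1}}{1+l}(1,l,\bar{l})$, on a left ray from the matched decay rates at $-\infty$ in Lemmas \ref{lem:3} and \ref{lem:4} after a sufficiently large shift, and then closes the compact middle interval with the sliding-domain Proposition \ref{lem:Sliding11}. Your explicit sign analysis $A_{12c},d_{c}<0$ in the critical-speed case and the reduction of the $w$-comparison to the $v$-comparison via $\bar{l}\leq l$ are in fact slightly more careful than the paper's own sketch, but the structure of the argument is identical.
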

\begin{proof}
On the boundary, we have $(\bar{u},\bar{v},\bar{w})(\xi)\rightarrow(1,1,1)$
and $(\underline{u},\underline{v},\underline{w})(\xi)\rightarrow(\frac{1-a_{1}}{1+l},l\frac{1-a_{1}}{1+l},\bar{l}\frac{1-a_{1}}{1+l})<(1,1,1)$
as $\xi\rightarrow+\infty$ , hence there exists a sufficiently large
$N_{1}>0$ such that for any $\eta\geq0,$ 

\begin{equation}
(\bar{u},\bar{v},\bar{w})(\xi+\eta)>(\underline{u},\underline{v},\underline{w})(\xi),\quad\xi\in[N_{1},\,+\infty).\label{eq:3.13}
\end{equation}

while at $\xi=-\infty$, we have the asymptotics of the upper and
lower solutions:

For $c>2\sqrt{1-a_{1}}$, 

\begin{equation}
\left(\begin{array}{c}
\bar{u}(\xi)\\
\\
\bar{v}(\xi)\\
\\
\bar{w}(\xi)
\end{array}\right)=\left(\begin{array}{c}
A_{1}\\
\\
A_{2}\\
\\
A_{2}
\end{array}\right)e^{\frac{c-\sqrt{c^{2}-4(1-a_{1})}}{2}\xi}+o(e^{\frac{c-\sqrt{c^{2}-4(1-a_{1})}}{2}\xi});\label{eq:3.14}
\end{equation}
and

\begin{equation}
\left(\begin{array}{c}
\underline{u}(\xi)\\
\\
\underline{v}(\xi)\\
\\
\underline{w}(\xi)
\end{array}\right)=\left(\begin{array}{c}
A_{1u}\\
\\
A_{2u}\\
\\
A_{3u}
\end{array}\right)e^{\frac{c-\sqrt{c^{2}-4(1-a_{1})}}{2}\xi}+o(e^{\frac{c-\sqrt{c^{2}-4(1-a_{1})}}{2}\xi});\label{eq:3.15}
\end{equation}

While for $c=2\sqrt{1-a_{1}}$

\begin{equation}
\left(\begin{array}{c}
\bar{u}(\xi)\\
\\
\bar{v}(\xi)\\
\\
\bar{w}(\xi)
\end{array}\right)=\left(\begin{array}{c}
A_{11}+B_{11}\xi\\
\\
A_{21}+B_{21}\xi\\
\\
A_{31}+B_{31}\xi
\end{array}\right)e^{\sqrt{1-a_{1}}\xi}+o(e^{\sqrt{1-a_{1}}\xi})\label{eq:3.16}
\end{equation}
and 
\begin{equation}
\left(\begin{array}{c}
\underline{u}(\xi)\\
\\
\underline{v}(\xi)\\
\\
\underline{v}(\xi)
\end{array}\right)=\left(\begin{array}{c}
A_{11}+B_{11}\xi\\
\\
A_{21}+B_{21}\xi\\
\\
A_{31}+B_{31}\xi
\end{array}\right)e^{\sqrt{1-a_{1}}\xi}+o(e^{\sqrt{1-a_{1}}\xi})\label{eq:3.17}
\end{equation}

Then it is easy to see that there exists a $\eta_{0}\geq0$ such that
for any $\eta\geq\eta_{0}$, we have a $-N_{2}<0$ and the relation

\begin{equation}
(\bar{u},\bar{v},\bar{w})(\xi+\eta)>(\underline{u},\underline{v},\underline{w})(\xi),\quad\xi\in(-\infty,\,-N_{2}]\label{eq:3.18}
\end{equation}
holds for $c\geq2\sqrt{1-a_{1}}$. We may further adjust $N_{1}$
and $N_{2}$ such that (\ref{eq:3.13}) and (\ref{eq:3.18}) hold
on the interval $[N,+\infty)$ and $(-\infty,N]$ respectively for
some large $N>0$. While on the interval $[-N,N]$, $(\bar{u},\bar{v},\bar{w})(\xi+\eta)$
and $(\underline{u},\underline{v},\underline{w})(\xi)$ satisfy the
conditions of Proposition \ref{lem:Sliding11}, we therefore have
\begin{equation}
(\bar{u},\bar{v},\bar{w})(\xi+\eta)\geq(\underline{u},\underline{v},\underline{w})(\xi),\quad\xi\in[-N,N].\label{eq:3.19}
\end{equation}

The conclusion of the lemma follows from (\ref{eq:3.13}), (\ref{eq:3.18})
and (\ref{eq:3.19}). 
\end{proof}
In the sequal we still write the shifted upper solution as $(\bar{u},\bar{v},\bar{w})(\xi)$,
$\xi\in\mathbb{R}$.

\subsection{\label{sub:Monotone-waves-and}Monotone waves and their asymptotics.}

With such constructed ordered upper and lower solution pairs, we now
have
\begin{thm}
\label{thm:2.06}Assume either the conditions H1 and H2a or H1, H2b
and H3, then for every $c\geq2\sqrt{1-a_{1}}$, system (\ref{eq:1.11})
has a unique (up to a translation of the origin) traveling wave solution.
The traveling wave solution is strictly increasing on $\mathbb{R}$
and has the following asymptotic properties:

1. Corresponding to the wave speed $c>2\sqrt{1-a_{1}}$, 
\begin{equation}
\left(\begin{array}{c}
u(\xi)\\
\\
v(\xi)\\
\\
w(\xi)
\end{array}\right)=\left(\begin{array}{c}
A_{1}\\
\\
A_{2}\\
\\
A_{3}
\end{array}\right)e^{\frac{c-\sqrt{c^{2}-4(1-a_{1})}}{2}\xi}+o(e^{\frac{c-\sqrt{c^{2}-4(1-a_{1})}}{2}\xi})\label{eq:3.20}
\end{equation}
 as $\xi\rightarrow-\infty$; 

while Corresponding to the wave speed $c=2\sqrt{1-a_{1}}$, we have

\begin{equation}
\left(\begin{array}{c}
u(\xi)\\
\\
v(\xi)\\
\\
w(\xi)
\end{array}\right)=\left(\begin{array}{c}
(A_{11c}+A_{12c}\xi)\\
\\
(A_{21c}+A_{22c}\xi)\\
\\
(A_{31c}+A_{32c}\xi)
\end{array}\right)e^{\sqrt{1-a_{1}}\xi}+o(\xi e^{\sqrt{1-a_{1}}\xi})\label{eq:3.21}
\end{equation}
 as $\xi\rightarrow-\infty$.

For any speed $c\geq2\sqrt{1-a_{1}}$, we have

\begin{equation}
\left(\begin{array}{c}
u(\xi)\\
\\
v(\xi)\\
\\
w(\xi)
\end{array}\right)=\left(\begin{array}{c}
1\\
\\
1\\
\\
1
\end{array}\right)-\bar{M}(\xi)+o(\bar{M}(\xi))\qquad\xi\rightarrow+\infty,\label{eq:3.22}
\end{equation}
where

\begin{equation}
\begin{array}{lll}
\bar{M}(\xi) & = & s_{1}\left(\begin{array}{c}
1\\
\\
0\\
\\
0
\end{array}\right)e^{\frac{c-\sqrt{c^{2}+4}}{2}\xi}+s_{2}\left(\begin{array}{c}
\frac{a_{1}}{r(1-a_{2})+1}\\
\\
\tau r(1-a_{2})+1\\
\\
1
\end{array}\right)e^{\frac{c-\sqrt{c^{2}+4(a_{2}-1)}}{2}\xi}\\
\\
 &  & +s_{3}\left(\begin{array}{c}
1\\
\\
0\\
\\
1
\end{array}\right)e^{\frac{c-\sqrt{c^{2}+\frac{4}{\tau}}}{2}\xi},
\end{array}\label{eq:3.23}
\end{equation}
and the real numbers $s_{i}$, $i=1,2,3$ are not all zeros and be
such that $-\bar{M}(\xi)+o(\bar{M}(\xi))<0$ for $\xi>0$ large enough.
The speed $c=2\sqrt{1-a_{1}}$ is the minimal wave speed in the sense
that below which there is no monotone waves of (\ref{eq:1.11}).\end{thm}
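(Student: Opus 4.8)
The plan is to run the monotone iteration scheme on the ordered upper and lower solution pairs already produced, then to extract monotonicity, uniqueness, and the two boundary expansions, and finally to read off the minimal speed from the linearization at $-\infty$. I would treat the two regimes in parallel, since in both a pair of ordered smooth upper and lower solutions is in hand: under H1 and H2a these are (\ref{eq:3.2}) and (\ref{eq:3.6}), ordered by Remark \ref{Remark6}; under H1, H2b and H3 they are the functions of Lemma \ref{lem:a_{2}u} and Lemma \ref{lem:LowerH2b}, which become ordered after the leftward shift performed in the preceding lemma. For existence, fix $c\geq 2\sqrt{1-a_{1}}$ and write (\ref{eq:1.11}) as $U''-cU'-\beta U+H(U)=0$ with $H(U)=F(U)+\beta U$. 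Because the off-diagonal derivatives $\partial F_{i}/\partial u_{j}$ are nonnegative and all derivatives are bounded on $[0,1]^{3}$, a large $\beta$ makes $H$ nondecreasing in each component there. Starting the iteration of \cite{WuZou} from the upper solution produces a sequence that is monotone and trapped between the upper and lower solutions; passing to the limit in $C^{2}_{loc}$ yields a solution of (\ref{eq:1.11}) lying between them, hence with the correct limits at $\pm\infty$.

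For monotonicity and uniqueness I would lean on the sliding domain technique of Proposition \ref{lem:Sliding11}, adapted from \cite{Berestycki}. Monotonicity is obtained by comparing a solution $U$ with its translate $U(\cdot+h)$, $h>0$: the matched exponential behavior at $-\infty$ (from the squeeze, below) and the convergence to $(1,1,1)$ at $+\infty$ supply the strict boundary inequalities needed to start the slide on each window $[-N,N]$, yielding $U(\cdot+h)\geq U$ and hence $U'\geq 0$; the strong maximum principle applied to the cooperative variational system solved by $U'$ upgrades this to $U'>0$. The same device, applied to two solutions, gives uniqueness up to translation: one normalizes a translate, uses the common decay rate at $-\infty$ and the common limit at $+\infty$ to slide one solution past the other until they first touch, and concludes equality from the maximum principle.

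For the asymptotics, at $-\infty$ the solution is squeezed between upper and lower solutions whose leading behaviors coincide by Lemmas \ref{lem:3} and \ref{lem:4} (either $e^{\frac{c-\sqrt{c^{2}-4(1-a_{1})}}{2}\xi}$ or, at criticality, $\xi e^{\sqrt{1-a_{1}}\xi}$), so the squeeze delivers (\ref{eq:3.20}) and (\ref{eq:3.21}). At $+\infty$ I would linearize (\ref{eq:1.11}) about the hyperbolic rest state $(1,1,1)$; the three scalar characteristic equations produce the decay exponents displayed in (\ref{eq:3.23}), the coupling between components fixes the accompanying eigenvectors, and monotonicity forces the surviving combination to satisfy $-\bar{M}(\xi)+o(\bar{M}(\xi))<0$, giving (\ref{eq:3.22}). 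The minimal speed then follows because the $u$-component is a positive monotone function tending to $0$ at $-\infty$, where the $u$-equation linearizes to $u''-cu'+(1-a_{1})u=0$; its roots are real only for $c\geq 2\sqrt{1-a_{1}}$, and for smaller $c$ the forced oscillation destroys positivity, matching the threshold of Lemma \ref{lem:4}.

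I expect the main obstacle to be the $+\infty$ expansion (\ref{eq:3.22})--(\ref{eq:3.23}): one must solve the linearized system at $(1,1,1)$, decide which of the three modes are actually excited, determine the coupling vectors through the resonance denominators, and verify the sign condition that keeps the wave below $(1,1,1)$; parameter values producing equal or resonant exponents would require separate treatment.
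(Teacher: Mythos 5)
Your proposal is correct and follows essentially the same route as the paper: monotone iteration on the ordered pairs of Sections \ref{sub:3.1}--\ref{sub:3.2} for existence, the squeeze between upper and lower solutions with matching rates (Lemmas \ref{lem:3} and \ref{lem:4}) for the expansions at $-\infty$, the sliding-domain Proposition \ref{lem:Sliding11} for uniqueness, a linear analysis at $(1,1,1)$ for the expansion at $+\infty$, and oscillation of solutions for $c<2\sqrt{1-a_{1}}$ for the minimal speed. The only differences are cosmetic: the paper obtains monotonicity directly from the iteration (sharpened to strict monotonicity by the maximum principle applied to the system satisfied by $U'$), and it derives (\ref{eq:3.22})--(\ref{eq:3.23}) by differentiating (\ref{eq:1.11}), using exponential dichotomy for the limiting system satisfied by $U'$, decoupling it via the explicit matrix $P$ in (\ref{eq:3.29}) and integrating, rather than linearizing about $(1,1,1)$ and invoking stable-manifold-type reasoning --- the exponents and coupling vectors you anticipate are exactly those computed there.
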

\begin{proof}
Starting from the upper and lower solution pairs obtained in section
\ref{sub:3.1} and section \ref{sub:3.2} and using the monotone iteration
scheme provided in \cite{WuZou,BoumenirNguyen}, we obtain the existence
of the solution $(u(\xi),v(\xi),w(\xi))$ to (\ref{eq:1.11}) for
every fixed $c\geq2\sqrt{1-a_{1}}$. The solution satisfies 
\[
\left(\begin{array}{c}
\underline{u}(\xi)\\
\\
\underline{v}(\xi)\\
\\
\underline{w}(\xi)
\end{array}\right)\leq\left(\begin{array}{c}
u(\xi)\\
\\
v(\xi)\\
\\
w(\xi)
\end{array}\right)\leq\left(\begin{array}{c}
\bar{u}(\xi)\\
\\
\bar{v}(\xi)\\
\\
\bar{w}(\xi)
\end{array}\right)\qquad\xi\in\mathbb{R}.
\]

Lemma \ref{lem:3} and Lemma \ref{lem:4} imply that the upper- and
the lower-solutions as derived in section \ref{sub:3.1} and section
\ref{sub:3.2} have the same asymptotic rates at $-\infty$. Then
(\ref{eq:3.20}) and (\ref{eq:3.21}) then follow from Lemmas \ref{lem:3}
and \ref{lem:4}.

To derive the asymptotic decay rate of the traveling wave solutions
at $+\infty$, we let $c\geq2\sqrt{1-a_{1}}$ and 
\begin{equation}
U(\xi):=(u(\xi),v(\xi),w(\xi))\quad\xi\in\mathbb{R}\label{eq:3.24}
\end{equation}
 be the corresponding traveling wave solution of (\ref{eq:1.11})
generated from the monotone iteration. We differentiate (\ref{eq:1.11})
with respect to $\xi$, and note that $U'(\xi):=(w_{1},w_{2},w_{3})^{T}(\xi)$
satisfies 
\begin{equation}
(w_{1})_{\xi\xi}-c(w_{1})_{\xi}+A_{11}(u,v,w)w_{1}+A_{12}(u,v,w)w_{2}+A_{13}(u,v,w)w_{3}=0,\label{eq:3.25}
\end{equation}
 
\begin{equation}
(w_{2})_{\xi\xi}-c(w_{2})_{\xi}+A_{21}(u,v,w)w_{1}+A_{22}(u,v,w)w_{2}+A_{23}(u,v,w)w_{3}=0,\label{eq:3.26}
\end{equation}
\\
\begin{equation}
(w_{3})_{\xi\xi}-c(w_{3})_{\xi}+A_{31}(u,v,w)w_{1}+A_{32}(u,v,w)w_{2}+A_{33}(u,v,w)w_{3}=0\label{eq:3.27}
\end{equation}
 where 

\[
\left(\begin{array}{ccc}
A_{11} & A_{12} & A_{13}\\
A_{21} & A_{22} & A_{23}\\
A_{31} & A_{32} & A_{33}
\end{array}\right)=\left(\begin{array}{ccc}
1-a_{1}-2u+a_{1}w & 0 & a_{1}u\\
a_{2}r(1-v) & -r-a_{2}ru+2rv & 0\\
0 & \frac{1}{\tau} & -\frac{1}{\tau}
\end{array}\right).
\]

The limit system of (\ref{eq:3.25}), (\ref{eq:3.26}) and (\ref{eq:3.27})
at $\xi=+\infty$ is 
\begin{equation}
\left\{ \begin{array}{l}
(\psi_{1})_{\xi\xi}-c(\psi_{1})_{\xi}-\psi_{1}+a_{1}\psi_{3}=0,\\
\\
(\psi_{2})_{\xi\xi}-c(\psi_{2})_{\xi}+r(1-a_{2})\psi_{2}=0,\\
\\
(\psi_{3})_{\xi\xi}-c(\psi_{3})_{\xi}+\frac{1}{\tau}\psi_{2}-\frac{1}{\tau}\psi_{3}=0.
\end{array}\right.\label{eq:3.28}
\end{equation}

It is easy to see that system (\ref{eq:3.28}) admits exponential
dichotomy (\cite{Coddington}). Since the traveling wave solution
$(u(\xi),v(\xi),w(\xi))$ converge monotonically to a constant limit
as $\xi\rightarrow\pm\infty$, the derivative of the traveling wave
solution satisfies $(w_{1}(\pm\infty),w_{2}(\pm\infty),w_{3}(\pm\infty))=(0,0,0)$
(\cite{WuZou}, p.$658$ Lemma $3.2$). Hence we are only interested
in finding bounded solutions of (\ref{eq:3.28}) at $+\infty$.

Introducing transformation $\Psi=PY$ by 
\begin{equation}
\left(\begin{array}{c}
\psi_{1}(\xi)\\
\psi_{2}(\xi)\\
\psi_{3}(\xi)
\end{array}\right)=\left(\begin{array}{ccc}
1 & \frac{a_{1}}{r(1-a_{2})+1} & \frac{a_{1}\tau}{\tau-1}\\
0 & \tau r(1-a_{2})+1 & 0\\
0 & 1 & 1
\end{array}\right)\left(\begin{array}{c}
y_{1}(\xi)\\
y_{2}(\xi)\\
y_{3}(\xi)
\end{array}\right)\doteq P\left(\begin{array}{c}
y_{1}(\xi)\\
y_{2}(\xi)\\
y_{3}(\xi)
\end{array}\right)\label{eq:3.29}
\end{equation}
we can decouple (\ref{eq:3.28}) into the following equivalent system: 

\begin{equation}
\left\{ \begin{array}{l}
(y_{1})_{\xi\xi}-c(y_{1})_{\xi}-y_{1}=0,\\
\\
(y_{2})_{\xi\xi}-c(y_{2})_{\xi}+r(1-a_{2})y_{2}=0,\\
\\
(y_{3})_{\xi\xi}-c(y_{3})_{\xi}-\frac{1}{\tau}y_{3}=0,
\end{array}\right.\label{eq:3.30}
\end{equation}
and find its bounded solutions at $+\infty$ explicitely. In fact,
for some nonzero constants $d_{1},d_{2},d_{3}$, we have

\begin{equation}
y_{1}(\xi)=d_{1}e^{\frac{c-\sqrt{c^{2}+4}}{4}\xi},\quad y_{2}(\xi)=d_{2}e^{\frac{c-\sqrt{c^{2}+4r(a_{2}-1)}}{2}\xi},\quad y_{3}(\xi)=d_{3}e^{\frac{c-\sqrt{c^{2}+\frac{4}{\tau}}}{2}\xi}.\label{eq:3.31}
\end{equation}

Transforming back to $\Psi$ we have

\begin{equation}
\left(\begin{array}{c}
\psi_{1}(\xi)\\
\psi_{2}(\xi)\\
\psi_{3}(\xi)
\end{array}\right)=P\left(\begin{array}{c}
y_{1}(\xi)\\
y_{2}(\xi)\\
y_{3}(\xi)
\end{array}\right)=\left(\begin{array}{c}
y_{1}(\xi)+\frac{a_{1}}{r(1-a_{2})+1}y_{2}(\xi)+\frac{a_{1}\tau}{\tau-1}y_{3}(\xi)\\
(\tau r-\tau ra_{2}+1)y_{2}(\xi)\\
y_{2}(\xi)+y_{3}(\xi)
\end{array}\right),\label{eq:3.32}
\end{equation}
Hence we have (\ref{eq:3.22}) on intergrating (\ref{eq:3.32}).

We next show the strict monotonicity of the traveling wave solutions,
which will be a key ingredient in locating the eigenvalues of the
linearized operator about the traveling wave in a separate study.
By the monotone iteration process (see \cite{WuZou}), the traveling
wave solution $U(\xi)$ is increasing for $\xi\in\mathbb{R}$, it
then follows that $(w_{1}(\xi),w_{2}(\xi),w_{3}(\xi))^{T}=U'(\xi)\geq0$
and satisfies (\ref{eq:3.25}), (\ref{eq:3.26}) and (\ref{eq:3.27}).
The monotonicity of system (\ref{eq:1.11}) and the Maximum Principle
imply that $(w_{1},w_{2},w_{3})^{T}(\xi)>0$ for $\xi\in\mathbb{R}$.
This concludes the strict monotonicity of the traveling wave solutions.

On the uniqueness of the traveling wave solution for every $c\geq2\sqrt{1-a_{1}}$,
we only prove the conclusion for traveling wave solutions with asymptotic
rates given in (\ref{eq:3.20}) and (\ref{eq:3.22}), since other
case can be proved similarly. Let $U_{1}(\xi)=(u_{1},v_{1},w_{1})^{T}$
and $U_{2}(\xi)=(u_{2},v_{2},w_{2})^{T}$ be two traveling wave solutions
of system (\ref{eq:1.11}) with the same speed $c>2\sqrt{1-a_{1}}$.
There exist positive constants $A_{i}$, $B_{i}$, $i=1,2,3,4$ and
a large number $N>0$ such that for $\xi<-N$,
\begin{equation}
U_{1}(\xi)=\left(\begin{array}{c}
A_{1}\\
\\
A_{2}\\
\\
A_{3}
\end{array}\right)e^{\frac{c+\sqrt{c^{2}-4(1-a_{1})}}{2}\xi}+o(e^{\frac{c+\sqrt{c^{2}-4(1-a_{1})}}{2}\xi})\label{eq:3.33}
\end{equation}
 
\begin{equation}
U_{2}(\xi)=\left(\begin{array}{c}
A_{4}\\
\\
A_{5}\\
\\
A_{6}
\end{array}\right)e^{\frac{c+\sqrt{c^{2}-4(1-a_{1})}}{2}\xi}+o(e^{\frac{c+\sqrt{c^{2}-4(1-a_{1})}}{2}\xi});\label{eq:3.34}
\end{equation}
 and for $\xi>N$,
\begin{equation}
U_{1}(\xi)=\left(\begin{array}{c}
{\displaystyle 1-\bar{B}_{1}e^{\underline{\mu}_{1}\xi}}\\
\\
{\displaystyle 1-\bar{B}_{2}e^{\underline{\mu}_{2}\xi}}\\
\\
{\displaystyle 1-\bar{B}_{3}e^{\underline{\mu}_{3}\xi}}
\end{array}\right)+\left(\begin{array}{c}
{\displaystyle o(e^{\underline{\mu}_{1}\xi})}\\
\\
{\displaystyle o(e^{\underline{\mu}_{2}\xi}})\\
\\
{\displaystyle o(e^{\underline{\mu}_{3}\xi}})
\end{array}\right),\label{eq:3.35}
\end{equation}
 
\begin{equation}
U_{2}(\xi)=\left(\begin{array}{c}
{\displaystyle 1-\underline{B}_{1}e^{\underline{\mu}_{1}\xi}}\\
\\
{\displaystyle 1-\underline{B}_{2}e^{\underline{\mu}_{2}\xi}}\\
\\
{\displaystyle 1-\underline{B}_{3}e^{\underline{\mu}_{3}\xi}}
\end{array}\right)+\left(\begin{array}{c}
{\displaystyle o(e^{\underline{\mu}_{1}\xi})}\\
\\
{\displaystyle o(e^{\underline{\mu}_{2}\xi}})\\
\\
{\displaystyle o(e^{\underline{\mu}_{3}\xi}})
\end{array}\right),\label{eq:3.36}
\end{equation}
where $\underline{\mu}_{i}$ is one of the elements in the set \{$\frac{c-\sqrt{c^{2}+4(1-a_{1})}}{2},$
$\frac{c-\sqrt{c^{2}+4}}{2}$ , $\frac{c-\sqrt{c^{2}+4/\tau}}{2}$\},
and $\bar{B}_{i}$, $\underline{B}_{i}$ are positive numbers, $i=1,2,3$.
The traveling wave solutions of system (\ref{eq:1.11}) are translation
invariant, thus for any $\theta>0$, $U_{1}^{\theta}(\xi):=U_{1}(\xi+\theta)$
is also a traveling wave solution of (\ref{eq:1.11}). By (\ref{eq:3.33})
and (\ref{eq:3.35}), the solution $U_{1}(\xi+\theta)$ has the asymptotics
\begin{equation}
U_{1}^{\theta}(\xi)=\left(\begin{array}{c}
A_{1}e^{\frac{c+\sqrt{c^{2}-4(1-a_{1})}}{2}\theta}e^{\frac{c+\sqrt{c^{2}-4(1-a_{1})}}{2}\xi}\\
\\
A_{2}e^{\frac{c+\sqrt{c^{2}-4(1-a_{1})}}{2}\theta}e^{\frac{c+\sqrt{c^{2}-4(1-a_{1})}}{2}\xi}\\
\\
A_{3}e^{\frac{c+\sqrt{c^{2}-4(1-a_{1})}}{2}\theta}e^{\frac{c+\sqrt{c^{2}-4(1-a_{1})}}{2}\xi}
\end{array}\right)+o(e^{\frac{c+\sqrt{c^{2}-4\alpha}}{2}\xi})\label{eq:3.37}
\end{equation}
 for $\xi\leq-N$;
\begin{equation}
U_{1}^{\theta}(\xi)=\left(\begin{array}{c}
{\displaystyle 1-\bar{B}_{1}e^{\underline{\mu}_{1}\theta}e^{\underline{\mu}_{1}\xi}}\\
\\
{\displaystyle 1-\bar{B}_{2}e^{\underline{\mu}_{2}\theta}e^{\underline{\mu}_{2}\xi}}\\
\\
{\displaystyle 1-\bar{B}_{3}e^{\underline{\mu}_{3}\theta}e^{\underline{\mu}_{3}\xi}}
\end{array}\right)+\left(\begin{array}{c}
{\displaystyle o(e^{\underline{\mu}_{1}\xi})}\\
\\
{\displaystyle o(e^{\underline{\mu}_{2}\xi}})\\
\\
{\displaystyle o(e^{\underline{\mu}_{3}\xi}})
\end{array}\right)\label{eq:3.38}
\end{equation}
 for $\xi\geq N$.

Choosing $\theta>0$ large enough such that 
\begin{equation}
A_{i}e^{\frac{c+\sqrt{c^{2}-(1-a_{1})}}{2}\theta}>A_{3+i},\qquad i=1,2,3,\label{eq:3.39}
\end{equation}
 
\begin{equation}
\bar{B}_{i}e^{\underline{\mu}_{i}\theta}<\underline{B}_{i},\qquad i=1,2,3\label{eq:3.40}
\end{equation}
 then one has 
\begin{equation}
U_{1}^{\theta}(\xi)>U_{2}(\xi)\label{eq:3.41}
\end{equation}
 for $\xi\in(-\infty,-N]$$\cup$$[N+\infty).$ We now consider system
(\ref{eq:1.3}) on $[-N,+N]$. We can verify that $U_{1}^{\theta}(\xi)\doteq\bar{U}(\xi)$
and $U_{2}(\xi)\doteq\underline{U}(\xi)$ satisfy all the conditions
of Proposition \ref{lem:Sliding11}, hence we have $U_{1}^{\theta}(\xi)\geq U_{2}(\xi)$
for $\xi\in[-N,N]$. Further applying the Maximum Principle and noting
that $U_{1}^{\theta}(\pm N)>U_{2}(\pm N)$, we have $U_{1}^{\theta}(\xi)>U_{2}(\xi)$
for $\xi\in[-N,N]$ .

Consequently we have $U_{1}^{\theta}(\xi)>U_{2}(\xi)$ on $\mathbb{R}$.

Now, decrease $\theta$ until one of the following situations happens.

1. There exists a $\bar{\theta}\geq0$, such that $U_{1}^{\bar{\theta}}(\xi)\equiv U_{2}(\xi)$.
In this case we have finished the proof.

2. There exists a $\bar{\theta}\geq0$ and $\xi_{1}\in\mathbb{R}$,
such that one of the components of $U^{\bar{\theta}}$ and $U_{2}$
are equal there; and for all $\xi\in\mathbb{R}$, we have $U_{1}^{\bar{\theta}}(\xi)\geq U_{2}(\xi)$.
On applying the Maximum Principle on $\mathbb{R}$ for that component,
we find $U_{1}^{\bar{\theta}}$ and $U_{2}$ must be identical on
that component. To fix ideas, we suppose that the component is the
first component. Then $U_{1}^{\bar{\theta}}-U_{2}$ satisfies (\ref{eq:3.25}),
(\ref{eq:3.26}) and (\ref{eq:3.27}). Plugging $w_{1}\equiv0$ into
(\ref{eq:3.26}) we find that there is at least one $\xi_{\bar{\theta}}$
such that $w_{2}(\xi_{\bar{\theta}})=0$. Then by applying maximum
principle to (\ref{eq:3.26}), we have $w_{2}(\xi)\equiv0$ for $\xi\in\mathbb{R}$.
Similarly we also find $w_{3}(\xi)\equiv0,\,\xi\in\mathbb{R}$. We
have then returned to case 1.

Hence, in either situation, there exists a $\bar{\theta}\geq0$, such
that 
\[
U_{1}^{\bar{\theta}}(\xi)\equiv U_{2}(\xi).
\]
 for all $\xi\in\mathbb{R}$. 

The nonexistence of the monotone traveling waves for (\ref{eq:1.11})
comes from the fact that all its solutions are oscillatory for $c\leq2\sqrt{1-a_{1}}$.
\end{proof}
Concerning the wave solutions of system (\ref{eq:1.2}) we immediately
have
\begin{cor}
Assume the conditions H1 (\ref{Condition H1}) and H2a (\ref{ConditionH2a})
or conditions H1 (\ref{Condition H1}), H2b (\ref{CoditionH2b}) and
H3 (\ref{eq:ConditionH3}). Then for each $c\geq2\sqrt{1-a_{1}}$
and all $\tau>0$ system (\ref{eq:1.2}) has a unique monotonic traveling
wave solution connecting the equilibrium $(0,1)$ with $(1,0).$
\end{cor}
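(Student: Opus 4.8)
The plan is to obtain the corollary as an immediate consequence of Theorem \ref{thm:2.06} by reversing the two reductions set up in the introduction: the substitution (\ref{eq:1.4}) that recasts the delayed two-species system (\ref{eq:1.2}) as the local three-species system (\ref{eq:1.1}), and the monotone change of variables $\bar u=u$, $\bar v=1-v$, $\bar w=1-w$ that converts the traveling-wave problem (\ref{eq:1.10}) into the cooperative system (\ref{eq:1.11}). First I would invoke Theorem \ref{thm:2.06}: under either H1 and H2a, or H1, H2b and H3, for every $c\geq 2\sqrt{1-a_{1}}$ the system (\ref{eq:1.11}) admits a traveling wave solution $(\bar u,\bar v,\bar w)(\xi)$ that is strictly increasing on $\mathbb{R}$, unique up to a translation of the origin, with limits $(0,0,0)$ at $-\infty$ and $(1,1,1)$ at $+\infty$.

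Next I would undo the monotone change of variables. Setting $u=\bar u$, $v=1-\bar v$, $w=1-\bar w$ produces a solution $(u,v,w)(\xi)$ of (\ref{eq:1.10}) in which $u$ increases from $0$ to $1$ while $v$ and $w$ each decrease from $1$ to $0$; the strict monotonicity of $(\bar u,\bar v,\bar w)$ guaranteed by the theorem makes $u$ strictly increasing and $v,w$ strictly decreasing, and the boundary data become exactly $(u,v,w)(-\infty)=(0,1,1)$, $(u,v,w)(+\infty)=(1,0,0)$. Thus $(u(x+ct),v(x+ct),w(x+ct))$ is a monotone traveling wave of the three-species system (\ref{eq:1.1}) connecting $(0,1,1)$ to $(1,0,0)$. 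Because the constraints H1--H3 of Theorem \ref{thm:2.06} place no restriction on $\tau$, and $\tau>0$ enters only through the decay exponents in the $w$-component, this construction is valid for every $\tau>0$.

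The step that requires the most care, and which I expect to be the main obstacle, is the passage from the three-species wave back to a genuine wave of the delayed system (\ref{eq:1.2}); here I would argue that the profile $w(\xi)$ is forced to coincide with the spatio-temporal average $g**v$. The third equation of (\ref{eq:1.1}), namely (\ref{eq:1.5}), is the linear parabolic equation $w_t-w_{xx}+\tfrac{1}{\tau}w=\tfrac{1}{\tau}v$ whose Green's function is precisely the kernel $g$ appearing in (\ref{eq:1.4}). Since the wave profile $w$ lies in $[0,1]$ and is therefore bounded, it is the unique bounded solution of this equation, so it must equal the convolution $g**v$ of (\ref{eq:1.4}); substituting $w=g**v$ into the first equation of (\ref{eq:1.1}) then shows that $(u,v)$ solves (\ref{eq:1.2}) and connects $(0,1)$ with $(1,0)$, the normalization (\ref{eq:1.3}) ensuring the limits are preserved. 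Finally, uniqueness up to translation transfers along the same dictionary: any monotone wave of (\ref{eq:1.2}) yields, via (\ref{eq:1.4}) and the monotone substitution, a monotone wave of (\ref{eq:1.11}) with the asymptotics of Theorem \ref{thm:2.06}, and the theorem's uniqueness clause then identifies it up to a shift of the origin, which completes the argument.
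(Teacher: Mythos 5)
Your proposal is correct and takes essentially the same route as the paper: the paper obtains this corollary immediately from Theorem \ref{thm:2.06} combined with the equivalence between (\ref{eq:1.2}) and (\ref{eq:1.1}) established through the substitution (\ref{eq:1.4}) in the introduction, which are precisely the two reductions you reverse. The only difference is that you explicitly justify the identification $w=g**v$ (via uniqueness of bounded solutions of $w_{t}=w_{xx}+\frac{1}{\tau}(v-w)$ on $\mathbb{R}\times\mathbb{R}$), a step the paper leaves implicit in its ``we easily verify''/``is equivalent to'' assertions, and your argument for it is sound.
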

The conclusion of the Corollary says the delay does not change the
course of the traveling waves, but it may change the asymptotic behaviors
of the wave solutions at $+\infty$.


\begin{thebibliography}{10}
\bibitem[1]{Ashwin}Ashwin, P.; Bartuccelli, M. V.; Bridges, T. J.;
Gourley, S. A. Travelling fronts for the KPP equation with spatio-temporal
delay. Zeitschrift für angewandte Mathematik und Physik, Volume 53,
Issue 1, pp. 103-122 (2002). 

\bibitem[2]{Berestycki}Berestycki, H. and Nirenberg, L, Travelling
fronts in cylinders, Ann. Inst. H. Poincar\textasciiacute{}e, Anal.
non Lin\textasciiacute{}eaire, 9 (1992), pp. 497-572.

\bibitem[3]{BoumenirNguyen}A. Boumenir and V. Nguyen, Perron theorem
in monotone iteration method for traveling waves in delayed reaction-diffusion
equations, J. Diff. Eqs. 244 (2008), 1551-1570.

\bibitem[4]{Coddington}E. Coddington and N. Levinson, Theory of Ordinary
Differential Equations, McGraw-Hill, 1955.

\bibitem[5]{FeiCarr} N. Fei and J. Carr, Existence of travelling
waves with their minimal speed for a diffusing Lotka-Volterra system,
Nonlinear Analysis: Real World Applications, 4 (2003), 503-524.

\bibitem[6]{HouFeng}Xiaojie Hou; Wei Feng, Traveling waves and their
stability in a coupled reaction diffusion system Communications on
Pure and Applied Analysis, 2011;10(1):141-160.

\bibitem[7]{Hosono}Y. Hosono, Travelling waves for a diffusive Lotka-Volterra
competition model I: Singular Perturbations, Discrete and Continuous
Dynamical Systems-Series B, Vol. 3, No 1, (2003) pp.79-95.

\bibitem[8]{Huang} Huang, Wenzhang Problem on minimum wave speed
for a Lotka-Volterra reaction-diffusion competition model. J. Dynam.
Differential Equations 22 (2010), no. 2, 285\textendash{}297. 

\bibitem[9]{Hung}Li-Chang Hung, Traveling wave solutions of competitive\textendash{}cooperative
Lotka\textendash{}Volterra systems of three species, Nonlinear Analysis:
Real World Applications Volume 12, Issue 6, December 2011, Pages 3691\textendash{}3700

\bibitem[10]{Ikeda}Hideo Ikeda, Global bifurcation phenomena of standing
pulse solutions for three-component systems with competition and diffusion,
Hiroshima Math. J. Volume 32, Number 1 (2002), 87-124.

\bibitem[11]{Kanel}J. I Kanel, On the wave front of a competition-diffusion
system in popalation dynamics, Nonlinear Analysis, 65, (2006) 301-320.

\bibitem[12]{KanelZhou}J. I Kanel, Li Zhou, Existence of wave front
solutions and estimates of wave speed for a competition-diffusion
system, Nonlinear Analysis, Theory, Methods \& Applications, 27, No.
5, (1996) 579-587.

\bibitem[13]{Kan-on}Y. Kan-on, Note on propagation speed of travelling
waves for a weakly coupled parabolic system, Nonlinear Analysis 44
(2001) 239-246.

\bibitem[14]{Kan-on-1}Y. Kan-on, Fisher wave fronts for the lotka-volterra
competition model with diffusion, Nonlinear Analysis, Theory, methods
\& Applications, 28 No. 1, (1997) 145-164.

\bibitem[15]{LeungHouFeng}Anthony W. Leung; Xiaojie Hou; Wei Feng,
Traveling wave solutions for Lotka-Volterra system re-visited, Discrete
and Continuous Dynamical Systems - Series B 2011;15(1):171-196.

\bibitem[16]{LiWeinbergerLewis} Li, B.,Weinberger,H., Lewis, M.:
Spreading speeds as slowestwave speeds for cooperative systems. Math.
Biosci. 196, 82\textendash{}98 (2005)

\bibitem[17]{Miller}Patrick Miller, Stability of non-monotone waves
in a three-species reaction-diffusion model, Royal Society (Edinburgh),
Proceedings, Section A. Vol. 129, pt. 1, pp. 125-152. 1999.

\bibitem[18]{Sattinger}D. Sattinger, On the stability of traveling
waves, Adv. in Math., 22 (1976), 312-355.

\bibitem[19]{Volpert} I. Volpert, V. Volpert and V. Volpert, Traveling
Wave Solutions of Parabolic Systems, Transl. Math. Monogr., vol 140,
Amer. Math. Soc., Providence, RI. 1994.

\bibitem[20]{WuZou} J. Wu and X. Zou, Traveling wave fronts of reaction-diffusion
systems with delay, J. Dynamics and Diff. Eq. 13 (2001), 651-687.
and Erratum to traveling wave fronts of reaction-diffusion systems
with delay, J. Dynamics and Diff. Eq. 2 (2008), 531-533.\end{thebibliography}
\end{document}